\documentclass[12pt]{article}
\usepackage{amsmath,amssymb,amsthm}
\numberwithin{equation}{section}
\newtheorem{theorem}{Theorem}[section]
\newtheorem{corollary}[theorem]{Corollary}
\newtheorem{lemma}[theorem]{Lemma}
\newtheorem{proposition}[theorem]{Proposition}
\theoremstyle{definition}
\newtheorem{definition}[theorem]{Definition}
\newtheorem{remark}[theorem]{Remark}

\newtheorem*{example}{Example}
\newcommand{\R}{\mathbb{R}}
\newcommand{\C}{\mathbb{C}}
\newcommand{\N}{\mathbb{N}}
\newcommand{\T}{\mathbb{T}}
\newcommand{\Z}{\mathbb{Z}}
\newcommand{\D}{\mathcal{D}}
\renewcommand{\L}{\mathcal{L}}
\newcommand{\M}{{\rm M}}
\newcommand{\MV}{{\rm MV}}
\newcommand{\Var}{{\rm Var\,}}
\newcommand{\const}{{\rm const}}
\newcommand{\supp}{\mathop{\rm supp}}
\newcommand{\meas}{\mathop{\rm meas}}
\newcommand{\pr}{{\rm pr}}
\renewcommand{\limsup}{\mathop{\overline{\lim}}}

\newcommand{\esslim}{\mathop{\rm ess\,lim}}

\newcommand{\sign}{\mathop{\rm sign}}

\renewcommand{\div}{\mathrel{\rm div}}

\def\Xint#1{\mathchoice
{\XXint\displaystyle\textstyle{#1}}%
{\XXint\textstyle\scriptstyle{#1}}%
{\XXint\scriptstyle\scriptscriptstyle{#1}}%
{\XXint\scriptscriptstyle\scriptscriptstyle{#1}}%
\!\int}
\def\XXint#1#2#3{{\setbox0=\hbox{$#1{#2#3}{\int}$ }
\vcenter{\hbox{$#2#3$ }}\kern-.57\wd0}}

\def\dashint{\Xint-}

\title{On a condition of strong precompactness and the decay of periodic entropy solutions to scalar conservation laws}

\author{Evgeny Yu. Panov\footnote{Novgorod State University, e-mail: \textbf{Eugeny.Panov@novsu.ru}}}
\date{}
\sloppy
\textwidth=145mm \textheight=220mm
\begin{document}
\maketitle

\begin{abstract}
We propose a new sufficient non-degeneracy condition for the strong precompactness of bounded sequences satisfying the nonlinear first-order differential constraints. This result is applied to establish the decay property for periodic entropy solutions to multidimensional scalar conservation laws.
\end{abstract}

\section{Introduction}

Let $\Omega$ be an open domain in $\R^n$.
We consider the sequence $u_k(x)$, $k\in\N$, bounded in $L^\infty(\Omega)$,
which converges weakly-$*$ in $L^\infty(\Omega)$ to some function $u(x)$:
$u_k\mathop{\rightharpoonup}\limits_{k\to\infty} u$. Now let $\varphi(x,u)\in L^2_{loc}(\Omega,C(\R,\R^n))$
be a Caratheodory vector-function (i.e. it is continuous with
respect to $u$ and measurable with respect to $x$) such that the functions
\begin{equation}\label{1}
\alpha_M(x)=\max\limits_{|u|\le M} |\varphi(x,u)|\in
L^2_{loc}(\Omega) \quad \forall M>0
\end{equation}
(here and below $|\cdot|$ stands for the Euclidean norm of a
finite-dimensional vector).

By
$\theta(\lambda)$ we shall denote the Heaviside function:
$$
\theta(\lambda)=\left\{\begin{array}{ll}
1,~&\lambda>0, \\
0,~&\lambda\le 0.
\end{array}\right.
$$
Suppose that for every $p\in\R$ the sequence of distributions
\begin{equation}\label{2}
\div_x [\theta(u_k-p)(\varphi(x,u_k)-\varphi(x,p))] \ \mbox{ is precompact in } W_{d,loc}^{-1}(\Omega)
\end{equation}
for some $d>1$. Recall that $W_{d,loc}^{-1}(\Omega)$ is a locally convex space of
 distributions $u(x)$  such  that  $uf(x)$
belongs to the Sobolev space $W_d^{-1}$ for all $f(x)\in
C_0^\infty(\Omega)$.
 The topology in $W_{d,loc}^{-1}(\Omega)$ is generated
by the family of semi-norms $u\to\|uf\|_{W_d^{-1}}$, $f(x)\in
C_0^\infty(\Omega)$.

If the distributions $\div_x\varphi(x,k)$ are locally finite measures on $\Omega$ for all $k\in\R$, then the notion
of entropy solutions (in Kruzhkov's sense) of the equation
\begin{equation}\label{eq1}
\div\varphi(x,u)+\psi(x,u)=0
\end{equation}
(with a Caratheodory source function $\psi(x,u)\in L^1_{loc}(\Omega,C(\R))$) is defined, see \cite{PaARMA} and \cite{PaJMS} (in the latter paper the more general ultra-parabolic equations are studied).
As was shown in \cite{PaJMS}, assumption (\ref{2}) is always satisfied for bounded sequences of entropy solutions
of (\ref{eq1}).

Our first result is the following strong precompactness property.

\begin{theorem}\label{th1}
Suppose that for almost every $x\in\Omega$ and all $\xi\in\R^n$, $\xi\not=0$ the function
$\lambda\to\xi\cdot\varphi(x,\lambda)$ is not constant in any vicinity of the point $u(x)$ (here and in the sequel ``$\cdot$'' denotes the inner product in $\R^n$).
Then $u_k(x)\mathop{\to}\limits_{k\to\infty} u(x)$ in $L^1_{loc}(\Omega)$ (strongly).
\end{theorem}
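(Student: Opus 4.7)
The plan is to combine the theory of Young measures with a compensated-compactness argument, using the non-degeneracy hypothesis to collapse the Young measure to a Dirac mass. By extracting a subsequence I may assume that $\{u_k\}$ generates a Young measure $\{\nu_x\}_{x\in\Omega}$ with uniformly compact support in $\R$, so that $g(u_k)\rightharpoonup\int_\R g(\lambda)\,d\nu_x(\lambda)$ weakly-$*$ in $L^\infty_{loc}(\Omega)$ for every $g\in C(\R)$. Since the sequence is uniformly bounded, strong $L^1_{loc}$-convergence $u_k\to u$ is equivalent to $\nu_x=\delta_{u(x)}$ for a.e.\ $x$, and this is the goal.

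The core step is to extract enough information on $\nu_x$ from the precompactness hypothesis (\ref{2}). For each $p\in\R$ the sequence $F^k_p(x):=\theta(u_k-p)(\varphi(x,u_k)-\varphi(x,p))$ is bounded in $L^2_{loc}(\Omega,\R^n)$ by (\ref{1}), while by (\ref{2}) its divergence is precompact in $W^{-1}_{d,loc}$ for some $d>1$. Murat's lemma, combined with the natural bound in the space of locally finite measures, upgrades this to $H^{-1}_{loc}$-precompactness. I would then introduce an H-measure in the sense of Tartar--G\'erard associated with the family of scalar sequences $\theta(u_k-p)-\nu_x((p,+\infty))$ indexed by $p\in\R$, and apply the localization principle to (\ref{2}). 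The outcome is a support condition relating the H-measure at a point $x$ to the set of directions $\xi\in S^{n-1}$ along which $\lambda\mapsto\xi\cdot\varphi(x,\lambda)$ is degenerate near $u(x)$.

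Under the hypothesis of the theorem this support condition is vacuous: $\lambda\mapsto\xi\cdot\varphi(x,\lambda)$ is non-constant on every neighborhood of $u(x)$ for each $\xi\neq 0$, so the admissible set of directions is empty for a.e.\ $x$, and the H-measure vanishes. This gives strong $L^2_{loc}$-convergence of $\theta(u_k-p)$ to $\theta(u-p)$ for every $p\in\R$, which is equivalent to $\nu_x=\delta_{u(x)}$ a.e., whence the desired strong convergence. The chief obstacle is the rigorous implementation of the compensated-compactness step: the flux $\varphi$ is only measurable in $x$, so the symbolic calculus underlying the localization principle is not directly available, and one must mollify $\varphi$ in $x$, establish localization for smooth fluxes, and pass to the limit using the $L^2_{loc}$-bound (\ref{1}) together with continuity in $\lambda$. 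The weakness of (\ref{2}) compared with $H^{-1}$-precompactness is handled by Murat's lemma as indicated, while uniform control in the parameter $p$ — needed in order to recover $\nu_x$ itself rather than isolated level sets — is a matter of bookkeeping.
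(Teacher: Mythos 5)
Your overall architecture (Young measure, H-measure with continuous indexes built from $U_k(x,p)=\theta(u_k-p)-\nu_x((p,+\infty))$, localization via the constraint (\ref{2})) is the same as the paper's, but the decisive step is missing. You assert that the localization principle yields a support condition in terms of directions $\xi$ along which $\lambda\mapsto\xi\cdot\varphi(x,\lambda)$ is degenerate \emph{near} $u(x)$, and that the hypothesis then makes this condition vacuous, so the H-measure vanishes. This is exactly where the argument breaks. First, the H-measure is only defined for indices $p,q$ in a co-countable set $E$, and $u(x)$ has no reason to lie in $E$; one must first construct one-sided limits $\mu^{pq\pm}_x$ as $p',q'\to p\pm$, $q\pm$ (Lemma~\ref{lem1}), using the modulus-of-continuity bound (\ref{7}). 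Second, and more importantly, the localization principle one can actually prove (Theorem~\ref{th3}) is intrinsically \emph{one-sided}: if $H_\pm$ denotes the span of $\supp\mu^{p_0p_0\pm}_x$, then $\xi\cdot\varphi(x,\cdot)$ is constant on $[p_0,p_0+\delta]$ for $\xi\in H_+$ and on $[p_0-\delta,p_0]$ for $\xi\in H_-$. The hypothesis of Theorem~\ref{th1} only forbids constancy on \emph{two-sided} neighborhoods of $u(x)$, so it does not force either $H_+$ or $H_-$ to be trivial, and the H-measure need not vanish on the strength of the localization alone. The bridge you are missing is Corollary~\ref{cor2}: when $p_0=u(x)$ is an interior point of the minimal segment containing $\supp\nu_x$ (which it is, being the barycenter, whenever $\nu_x$ is not a Dirac mass), the supports of $\mu^{p_0p_0+}_x$ and $\mu^{p_0p_0-}_x$ must intersect. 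This is proved by contradiction from the identity $\mu^{pq}_x(S)\to\nu_x((-\infty,p_0))\,\nu_x((p_0,+\infty))>0$ as $p\to p_0-$, $q\to p_0+$, combined with the Cauchy--Schwarz bound (\ref{pd2}). Only then does one obtain a single direction $\xi\neq0$ with two-sided constancy near $u(x)$, contradicting the hypothesis and forcing $\nu_x=\delta_{u(x)}$. Note the paper never shows the H-measure vanishes; it concludes directly that the Young measure is Dirac.

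A secondary but real defect: your proposed upgrade of (\ref{2}) to $H^{-1}_{loc}$-precompactness via Murat's lemma does not go through. Murat's interpolation requires boundedness in $W^{-1}_{r,loc}$ for some $r>2$, whereas the $L^2_{loc}$ bound (\ref{1}) (or a bound in the space of measures) gives at best $W^{-1}_{2,loc}$ boundedness. The paper avoids this by working directly in the $L^d$--$L^{d'}$ duality, using the Marcinkiewicz multiplier theorem to show $\psi(\xi/|\xi|)$ is a Fourier multiplier on $L^{d'}$, see (\ref{25})--(\ref{25a}).
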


Theorem~\ref{th1} extends the results of \cite{PaARMA}, where the strong precompactness property was established
under the more restrictive non-degeneracy condition:  {\it for almost every $x\in\Omega$ and all $\xi\in\R^n$, $\xi\not=0$ the function $\lambda\to\xi\cdot\varphi(x,\lambda)$ is not constant on nonempty intervals. }

The proof of Theorem~\ref{th1} is based on a new localization principle for H-measure (with ``continuous'' indexes)   corresponding to the sequence $u_k$, see Theorem~\ref{th3} and its Corollary~\ref{cor2} below.

Using this theorem and results of \cite{PaAIHP},
we will also derive the more precise criterion for the decay of periodic entropy solutions of scalar conservation laws
\begin{equation}\label{cl}
u_t+\div_x\varphi(u)=0,
\end{equation}
$u=u(t,x)$, $(t,x)\in\Pi=(0,+\infty)\times\R^n$. The flux vector $\varphi(u)=(\varphi_1(u),\ldots,\varphi_n(u))$
is supposed to be merely continuous: $\varphi(u)\in C(\R,\R^n)$.
Recall the definition of entropy solution to equation (\ref{cl}) in the Kruzhkov sense \cite{Kr}.

\begin{definition}\label{def1}
A bounded measurable function $u=u(t,x)\in L^\infty(\Pi)$ is called an entropy solution (e.s. for
short) of (\ref{cl}) if for all $k\in\R$
\begin{equation}\label{3}
|u-k|_t+\div_x[\sign(u-k)(\varphi(u)-\varphi(k))]\le 0
\end{equation}
in the sense of distributions on $\Pi$ (in $\D'(\Pi)$).
\end{definition}
As usual, condition (\ref{3}) means that for all non-negative test functions $f=f(t,x)\in C_0^1(\Pi)$
$$
\int_\Pi [|u-k|f_t+\sign(u-k)(\varphi(u)-\varphi(k))\cdot\nabla_xf]dtdx\ge 0.
$$
As was shown in \cite{Pa6} (see also \cite{Pa7}), an e.s. $u(t,x)$ always admits a strong trace
$u_0=u_0(x)\in L^\infty(\R^n)$ on the initial hyperspace $t=0$ in the sense of relation
\begin{equation}\label{4}
\esslim_{t\to 0} u(t,\cdot)=u_0 \ \mbox{ in } L^1_{loc}(\R^n),
\end{equation}
that is, $u(t,x)$ is an e.s. to the Cauchy problem for equation (\ref{cl}) with initial data
\begin{equation}\label{ini}
u(0,x)=u_0(x).
\end{equation}
\begin{remark}\label{rem1}
It was also established in \cite[Corollary~7.1]{Pa6} that, after possible correction on a set of null
measure, an e.s. $u(t,x)$ is continuous on $[0,+\infty)$ as a map $t\mapsto u(t,\cdot)$ into
$L^1_{loc}(\R^n)$. In the sequel we will always assume that this property is satisfied.
\end{remark}
Suppose that the initial function $u_0$ is periodic with a lattice of periods $L$, i.e., $u_0(x+e)=u_0(x)$ a.e. on $\R^n$ for every $e\in L$ (we will call such functions $L$-periodic). Denote by $\T^n=\R^n/L$ the corresponding $n$-dimensional torus, and by $L'$ the dual lattice $L'=\{ \ \xi\in\R^n \ | \ \xi\cdot x\in\Z \ \forall x\in L \ \}$.
In the case under consideration when the flux vector is merely continuous the property of finite speed of propagation for initial perturbation may be violated, which, in the multidimensional situation $n>1$, may even lead to the nonuniqueness of e.s. to Cauchy problem (\ref{cl}), (\ref{ini}), see examples in \cite{KrPa1,KrPa2}. But for a periodic initial function $u_0(x)$, an e.s. $u(t,x)$ of (\ref{cl}), (\ref{ini}) is unique (in the class of all e.s., not necessarily periodic) and space-periodic, the proof can be found in \cite{PaMax1}. It is also shown in \cite{PaMax1} that the mean value of e.s. over the period does not depend on time:
\begin{equation}\label{mass}
\int_{\T^n}u(t,x)dx=I\doteq\int_{\T^n}u_0(x)dx,
\end{equation}
where $dx$ is the normalized Lebesgue measure on $\T^n$.
The following theorem generalizes the previous results of \cite{Daferm,PaAIHP}.

\begin{theorem}\label{th2}
Suppose that
\begin{eqnarray}\label{ND2}
\forall\xi\in L', \xi\not=0 \ \mbox{ the function } u\to\xi\cdot\varphi(u) \nonumber\\ \mbox{ is not affine on any vicinity of } I.
\end{eqnarray}
Then
\begin{equation}\label{dec}
\lim_{t\to +\infty} u(t,\cdot)=I=\int_{\T^n} u_0(x)dx \ \mbox{ in } L^1(\T^n).
\end{equation}
Moreover condition (\ref{ND2}) is necessary and sufficient for the decay property (\ref{dec}).
\end{theorem}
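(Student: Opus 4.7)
I first dispatch necessity. If (\ref{ND2}) fails there is $\xi_0\in L'\setminus\{0\}$ and an interval $J\ni I$ on which $\xi_0\cdot\varphi(\lambda)=a\lambda+b$ is affine. Pick a smooth $1$-periodic $g$ of mean zero and $\epsilon>0$ small enough that $u_0(x):=I+\epsilon g(\xi_0\cdot x)$ is $L$-periodic with range in $J$. Seeking a solution of the form $u(t,x)=U(t,\xi_0\cdot x)$ reduces (\ref{cl}) on this range to the linear transport $U_t+aU_y=0$, and uniqueness of the periodic e.s.\ yields $u(t,x)=I+\epsilon g(\xi_0\cdot x-at)$. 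This traveling wave has $\|u(t,\cdot)-I\|_{L^1(\T^n)}$ constant in $t$, contradicting (\ref{dec}).

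For sufficiency I would run a translation/weak-limit scheme anchored by Theorem~\ref{th1}. Fix a sequence $t_k\to+\infty$ and set $u_k(t,x):=u(t+t_k,x)$ on $\Pi$. Each $u_k$ is an $L$-periodic (in $x$) e.s.\ uniformly bounded by $\|u\|_\infty$, and on passing to a subsequence $u_k\rightharpoonup^* v$ in $L^\infty(\Pi)$; by (\ref{mass}) the limit $v$ is $L$-periodic in $x$ with spatial mean $I$. Viewing the problem in the space-time variable $(t,x)\in\R^{n+1}$ with flux $\Phi(\lambda):=(\lambda,\varphi(\lambda))$, hypothesis (\ref{2}) holds because each $u_k$ is an e.s.\ (the entropy production is a locally finite measure, which sits in $W^{-1}_{d,loc}$ by Murat's lemma). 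Theorem~\ref{th1} applied in this setup would give strong convergence $u_k\to v$ in $L^1_{loc}$ provided that $\lambda\mapsto\tau\lambda+\eta\cdot\varphi(\lambda)$ is non-constant near $v(t,x)$ for a.e.\ $(t,x)$ and every $(\tau,\eta)\in\R^{n+1}\setminus\{0\}$. The case $\tau\ne 0$ is automatic by strict monotonicity in $\lambda$, so the content lies in the purely spatial directions $\eta\in\R^n\setminus\{0\}$.

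The main obstacle is the mismatch between hypothesis (\ref{ND2}), which certifies non-constancy of $\eta\cdot\varphi$ near $I$ and only for $\eta\in L'$, and the condition demanded by Theorem~\ref{th1}, which in principle wants arbitrary $\eta\in\R^n\setminus\{0\}$ and non-constancy near the weak limit $v(t,x)$. To bridge this I would combine two ingredients from \cite{PaAIHP}. First, the monotonicity of $F_\eta(t):=\int_{\T^n}\eta(u(t,x))\,dx$ for every convex $\eta$ yields $F_\eta(\infty):=\lim_{t\to\infty}F_\eta(t)$; once some form of strong convergence is in hand, $F_\eta(\infty)=\int_{\T^n}\eta(v(t,x))\,dx$ for every $t$, so every convex entropy of $v$ is conserved along the flow, forcing $v$ to be a stationary e.s.\ with mean $I$. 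Second, since the $u_k$ are $L$-periodic in $x$, the H-measure associated with $u_k-v$ is supported on directions proportional to vectors in $L'$, which restricts the set of $\eta$ that must be tested in the non-degeneracy condition to $L'\setminus\{0\}$. Together with a Fourier decomposition of the $L$-periodic $v-I$, whose non-trivial modes sit at $\eta\in L'\setminus\{0\}$ and are ruled out by (\ref{ND2}) at $I$, this forces $v\equiv I$. Closing the loop between strong convergence and identification of $v$ is the delicate point; I would handle it by a bootstrap, first localizing the range of the weak limit in an arbitrarily small neighborhood of $I$ using a coarser compactness input, then applying Theorem~\ref{th1} where (\ref{ND2}) is in force at every value attained. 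Since the subsequence $t_k$ was arbitrary, (\ref{dec}) follows.
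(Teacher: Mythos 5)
Your necessity argument is fine and essentially the paper's (the paper takes $g=\sin$; the verification that the traveling wave is an e.s.\ uses that $\xi_0\cdot\varphi$ is affine on the range of $u$). The sufficiency half, however, has a genuine gap, and it originates in your choice of the translation sequence $u_k(t,x)=u(t+t_k,x)$ instead of the dilation sequence $u_k(t,x)=u(kt,kx)$ used in the paper. Two things go wrong. First, for the translated sequence the weak-$*$ limit $v(t,x)$ is only known to have spatial mean $I$; it need not be the constant $I$. Hypothesis (\ref{ND2}) certifies non-degeneracy only \emph{at} $I$, so you cannot test it near the values $v(t,x)$, and your proposed ``bootstrap'' (``localizing the range of the weak limit in an arbitrarily small neighborhood of $I$ using a coarser compactness input'') is not an argument --- no such coarser input is identified, and producing one is essentially the whole theorem. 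The dilation sequence resolves this for free: by \cite[Lemma~3.2(i)]{PaAIHP} its weak limit is the weak-$*$ limit of the spatial means $a_0(k_rt)$, which by (\ref{mass}) is identically $I$; then Corollary~\ref{cor2}, applied at $p_0=I\in(a(t,x),b(t,x))$, only requires non-degeneracy at the single point $I$.

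Second, and more decisively, the localization of the H-measure on lattice directions that you invoke is not available for the translation sequence. The set $S_0=\{(\tau,\xi)/|(\tau,\xi)| : \xi\in L'\}$ is a countable union of meridians which is \emph{dense} in $S$ for $n\ge 2$, so the useful statement is concentration on $S_0$ itself, not on its closure. For the dilated sequence $u(kt,kx)$ the spatial spectrum lies in $kL'$, whose directions form the \emph{fixed} countable set $\{\xi/|\xi|:\xi\in L'\}$ while the frequencies escape to infinity along them; this is why \cite[Theorem~3.1]{PaAIHP} gives $\supp\mu^{pq}\subset\Pi\times S_0$. For the translated sequence the spectrum stays in the fixed lattice $L'$, and oscillations can develop at frequencies $\xi_k\in L'$ with $|\xi_k|\to\infty$ and $\xi_k/|\xi_k|$ converging to an arbitrary point of the sphere, so the H-measure may charge $S\setminus S_0$. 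Relatedly, Theorem~\ref{th1} cannot be applied as a black box in either scheme, since its hypothesis demands non-degeneracy for \emph{all} directions $\hat\xi\ne 0$, which (\ref{ND2}) does not provide; the paper instead reruns the proof of Theorem~\ref{th1} through Corollary~\ref{cor2} combined with the inclusion $\supp\mu^{II\pm}_{t,x}\subset S_0$, which restricts the directions to be tested to $\tau$ arbitrary and $\xi\in L'$ (the case $\xi=0$ being excluded exactly as you note). To repair your proof you should replace translations by dilations and quote the equivalence from \cite{ChF} between the decay property and strong convergence of the dilated subsequence.
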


In the case $\varphi(u)\in C^2(\R,\R^n)$ Theorem~\ref{th2} was proved in \cite{Daferm}. As was noticed in
\cite[Remark~2.1]{Daferm}, decay property (\ref{dec}) holds under the the weaker regularity requirement $\varphi(u)\in C^1(\R,\R^n)$ but under the more restrictive assumption that for each $\xi\in L'$ \ $I$ is not an interior point of the closure of the union of all open intervals, over which the function $\xi\cdot\varphi'(u)$
is constant. Let us demonstrate that condition (\ref{ND2}) is less restrictive than this assumption even in the case  $\varphi(u)\in C^1(\R,\R^n)$. Suppose that $n=1$, $\varphi(u)\in C^1(\R)$ is a primitive of the Cantor function, so that $\varphi'(u)$ is increasing, continuous, and maximal intervals, over which it remains constant, are exactly the connected component of the complement $\R\setminus K$ of the Cantor set $K\subset [0,1]$. Since $K$ has the empty interior the assumption of \cite{Daferm} is never satisfied while (\ref{ND2}) holds for each $I\in K$.

\section{Preliminaries}\label{sec1}
We need the concept of measure valued functions (Young measures). Recall (see \cite{Di,Ta1}) that a measure-valued function on $\Omega$ is a weakly
measurable map $x\mapsto \nu_x$ of $\Omega$ into the space $\operatorname{Prob}_0(\R)$ of probability
Borel measures with compact support in~$\R$.

The weak measurability of $\nu_x$ means that for each continuous function $g(\lambda)$ the
function $x\to\langle\nu_x,g(\lambda)\rangle\doteq\int g(\lambda)d\nu_x(\lambda)$ is measurable on~$\Omega$.

A  measure-valued function $\nu_x$  is said to be bounded if there
exists $M>0$ such  that $\supp\nu_x\subset[-M,M]$  for almost all
$x\in\Omega.$

Measure-valued functions of the kind
$\nu_x(\lambda)=\delta(\lambda-u(x))$, where $u(x)\in L^\infty(\Omega)$ and
$\delta(\lambda-u^*)$ is the Dirac measure at $u^*\in\R$, are called {\it regular}. We identify these
measure-valued functions and the corresponding functions $u(x)$, so that there is a natural
embedding of $L^\infty(\Omega)$ into the set $\MV(\Omega)$ of bounded measure-valued functions on~$\Omega$.

Measure-valued functions naturally arise as weak limits of bounded sequences in $L^\infty(\Pi)$ in the
sense of the following theorem by L.~Tartar \cite{Ta1}.

\begin{theorem}\label{thT}
Let $u_k(x)\in L^\infty(\Omega)$, $k\in\N$, be a bounded sequence. Then there exist a subsequence (we
keep the notation $u_k(x)$ for this subsequence) and a bounded measure valued function $\nu_x\in\MV(\Omega)$
such that\begin{equation} \label{pr2} \forall g(\lambda)\in C(\R) \quad g(u_k)
\mathop{\to}_{k\to\infty}\langle\nu_x,g(\lambda)\rangle \quad\text{weakly-\/$*$ in } L^\infty(\Omega).
\end{equation}
Besides, $\nu_x$ is regular, i.e., $\nu_x(\lambda)=\delta(\lambda-u(x))$ if and only if $u_k(x)
\mathop{\to}\limits_{k\to\infty} u(x)$ in $L^1_{loc}(\Omega)$ (strongly).
\end{theorem}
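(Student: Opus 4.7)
The plan is to prove Tartar's theorem in the standard way, using a diagonal extraction combined with a pointwise Riesz representation, and then to establish the regularity/strong convergence equivalence by a classical $L^2$ computation.

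First I would fix $M>0$ with $\|u_k\|_{L^\infty(\Omega)}\le M$ for all $k$. Since the space $C([-M,M])$ is separable under the uniform norm, I pick a countable dense subset $\{g_m\}_{m\in\N}\subset C([-M,M])$ containing the constants and the polynomial $\lambda\mapsto\lambda$. For each fixed $m$ the sequence $g_m(u_k)$ is bounded in $L^\infty(\Omega)$, so by the Banach--Alaoglu theorem and a standard diagonal argument I can extract a single subsequence (still denoted $u_k$) such that $g_m(u_k)\rightharpoonup^* v_m$ in $L^\infty(\Omega)$ for every $m$. By uniform approximation on $[-M,M]$, the convergence (\ref{pr2}) actually holds for every $g\in C(\R)$: the limit, denoted $\bar g(x)$, depends linearly and continuously on $g$ with $\|\bar g\|_{L^\infty(\Omega)}\le\sup_{|\lambda|\le M}|g(\lambda)|$, and positivity is preserved ($g\ge 0\Rightarrow\bar g\ge 0$ a.e.).

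Next I would produce the measure-valued function $\nu_x$. Let $N\subset\Omega$ be a null set outside of which, for every $m$, every inequality of the form $\bar g(x)\ge 0$ (for $g\ge 0$ in the countable dense set) and every linearity relation among the $g_m$ hold pointwise. For $x\notin N$, the assignment $g_m\mapsto v_m(x)$ extends by density to a positive linear functional $\Lambda_x$ on $C([-M,M])$ with $\Lambda_x(1)=1$. The Riesz--Markov theorem yields a unique Borel probability measure $\nu_x$ on $[-M,M]$ (which I extend by zero outside) with $\Lambda_x(g)=\langle\nu_x,g\rangle$. For $x\in N$ I set $\nu_x=\delta_0$, say. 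Weak measurability of $x\mapsto\nu_x$ reduces, by density, to the measurability of $x\mapsto v_m(x)$, which is immediate since $v_m\in L^\infty(\Omega)$. This gives $\nu_x\in\MV(\Omega)$ with the support bound $\supp\nu_x\subset[-M,M]$, and by construction (\ref{pr2}) holds.

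For the equivalence, one direction is routine: if $u_k\to u$ in $L^1_{loc}(\Omega)$ then along a further subsequence $u_k\to u$ a.e., so $g(u_k)\to g(u)$ a.e. and boundedly, which forces $\langle\nu_x,g\rangle=g(u(x))$ for all $g\in C(\R)$ and hence $\nu_x=\delta_{u(x)}$. Uniqueness of the Young measure (part of the Riesz representation) then implies the convergence holds for the original sequence. For the converse, the main step I expect to be the crux is to upgrade weak-$*$ to strong convergence: if $\nu_x=\delta_{u(x)}$, apply (\ref{pr2}) with $g(\lambda)=\lambda$ and $g(\lambda)=\lambda^2$ to get $u_k\rightharpoonup^* u$ and $u_k^2\rightharpoonup^* u^2$ in $L^\infty(\Omega)$. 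For every compact $K\subset\Omega$, testing against $\mathbf{1}_K\in L^1(\Omega)$,
\begin{equation*}
\int_K (u_k-u)^2\,dx=\int_K u_k^2\,dx-2\int_K u_k\,u\,dx+\int_K u^2\,dx\;\longrightarrow\;0,
\end{equation*}
so $u_k\to u$ in $L^2_{loc}(\Omega)$ and, by the bound $|u_k-u|\le 2M$, also in $L^1_{loc}(\Omega)$. The one subtlety worth care is ensuring the null set $N$ can be chosen independently of $g$, which I handle by working with the fixed countable dense family $\{g_m\}$ before passing to all of $C([-M,M])$.
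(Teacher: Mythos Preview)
Your argument is correct and is the standard proof of Tartar's theorem. Note, however, that the paper does not give its own proof of this statement: it is quoted as a classical result due to Tartar \cite{Ta1} and used as a preliminary tool, so there is no in-paper proof to compare against. Your diagonal extraction combined with the Riesz--Markov representation, and the $L^2$ computation via $g(\lambda)=\lambda$ and $g(\lambda)=\lambda^2$ for the strong-convergence equivalence, are exactly the textbook route. One small phrasing issue: in the backward implication you write that ``uniqueness of the Young measure\ldots implies the convergence holds for the original sequence,'' but at that point you are already working with the extracted subsequence, so what you actually need (and have) is just uniqueness of weak-$*$ limits to identify $\langle\nu_x,g\rangle$ with $g(u(x))$; no further subsequence argument is required for the conclusion.
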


We will essentially use in the sequel the variant of H-measures with ``continuous indexes'' introduced in
\cite{Pa3}. This variant extends the original concept of H-measure invented by L.~Tartar \cite{Ta2} and P.~Ger\'ard \cite{Ger} and it appears to be a powerful tool in nonlinear analysis.

Suppose $u_k(x)$ is a bounded sequence in $L^\infty(\Omega)$. Passing to a subsequence if necessary, we can suppose that this sequence converges
to a bounded measure valued function $\nu_x\in\MV(\Omega)$ in the sense of relation (\ref{pr2}). We introduce
the measures $\gamma_x^k(\lambda)= \delta(\lambda-u_k(x))-\nu_x(\lambda)$ and the corresponding
distribution functions $U_k(x,p)=\gamma_x^k((p,+\infty))$, $u_0(x,p)=\nu_x((p,+\infty))$ on
$\Omega\times\R$. Observe that $U_k(x,p), u_0(x,p)\in L^\infty(\Omega)$ for all $p\in\R$, see
\cite[Lemma 2]{Pa3}. We define the set
$$
E=E(\nu_x)=\left\{ \ p_0\in\R \ \mid \
u_0(x,p)\mathop{\to}\limits_{p\to p_0}\, u_0(x,p_0) \ \mbox{ in }
L_{loc}^1(\Omega) \ \right\}.
$$
As was shown in \cite[Lemma 4]{Pa3}, the complement $\R\setminus E$ is at most countable and if
$p\in E$ then $U_k(x,p)\mathop{\rightharpoonup}\limits_{k\to\infty}\, 0$ weakly-$*$ in
$L^\infty(\Omega)$.

Let $F(u)(\xi),$ $\xi\in\R^n,$
be the Fourier transform of a function $u(x)\in L^2(\R^n)$,
$S=S^{n-1}= \{ \ \xi\in\R^n \ \mid \ |\xi|=1 \ \}$
 be the unit sphere in $\R^n$. Denote by
 $u\to\overline{u}$,
$u\in\C$ the complex conjugation.

The next result was established in
\cite[Theorem~3]{Pa3}, \cite[Proposition~2, Lemma~2]{Pa5}.

\begin{proposition}\label{pro2}
(i) There exists a family of locally finite complex Borel measures $\left\{\mu^{pq}\right\}_{p,q\in E}$
in $\Omega\times S$ and a subsequence $U_r(x,p)=U_{k_r}(x,p)$ such that for all
$\Phi_1(x),\Phi_2(x)\in C_0(\Omega)$ and $\psi(\xi)\in C(S)$
\begin{equation}\label{Hm}
\langle\mu^{pq},\Phi_1(x)\overline{\Phi_2(x)}\psi(\xi)\rangle= \lim\limits_{r\to\infty}\int_{\R^n}
F(\Phi_1U_r(\cdot,p))(\xi)\overline{F(\Phi_2U_r(\cdot,q))(\xi)} \psi\left(\frac{\xi}{|\xi|}\right)d\xi;
\end{equation}


(ii) For any $p_1,\ldots,p_l\in E$
 the matrix $\{\mu^{p_ip_j}\}_{i,j=1}^l$ is Hermitian and nonnegative definite, that is,
for all $\zeta_1,\ldots,\zeta_l\in\C$ the measure
$$\sum_{i,j=1}^l \zeta_i\overline{\zeta_j}\mu^{p_ip_j}\ge
0.$$
\end{proposition}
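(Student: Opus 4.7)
The plan is to adapt the classical H-measure construction of Tartar and G\'erard, originally designed for finite tuples of $L^2$-weakly-null sequences, to the continuously parametrized family $\{U_k(\cdot,p)\}_{p\in E}$. For each fixed $p\in E$, the sequence $U_k(\cdot,p)$ is bounded in $L^\infty(\Omega)$ and, by the very definition of $E$, converges weakly-$*$ to zero. So for a single pair $(p,q) \in E \times E$ the classical construction applied to $U_k(\cdot,p)$ and $U_k(\cdot,q)$ produces, along a subsequence, a locally finite complex Borel measure $\mu^{pq}$ on $\Omega\times S$ satisfying formula (\ref{Hm}).

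Next I would pick a countable dense subset $D\subset E$ and, by a standard diagonal extraction, arrange that a single subsequence $U_r = U_{k_r}$ serves every pair $(p,q)\in D\times D$. To extend the family to all of $E\times E$ along this same subsequence, I would rely on the estimate
\begin{equation*}
\limsup_{r\to\infty}\int_K |U_r(x,p) - U_r(x,p')|^2\,dx \le C\int_K |u_0(x,p) - u_0(x,p')|\,dx,
\end{equation*}
valid on every compact $K\subset\Omega$; it follows because $|\theta(u_r-p)-\theta(u_r-p')|$ is the indicator of $u_r$ lying between $p$ and $p'$, whose weak-$*$ limit is controlled by $|u_0(\cdot,p)-u_0(\cdot,p')|$ via Theorem~\ref{thT}. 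Combined with Plancherel and a Cauchy--Schwarz bound applied to the sesquilinear form (\ref{Hm}), this gives $|\langle \mu^{p_mq}-\mu^{p'_mq}, \Phi_1\overline{\Phi_2}\psi\rangle| \to 0$ as $p_m, p'_m \to p$ in $D$ (and analogously in the second index), since $p\in E$ forces $u_0(\cdot,p_m)\to u_0(\cdot,p)$ in $L^1_{loc}$. Hence $\{\mu^{p_mq_m}\}$ is weakly Cauchy as $(p_m,q_m)\to(p,q)$ through $D\times D$, and $\mu^{pq}$ is defined as its limit; the Riesz representation theorem then confirms that this limit is a locally finite complex Borel measure.

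For part (ii), the Hermitian property $\mu^{qp}=\overline{\mu^{pq}}$ reads off directly from the conjugation symmetry of the integrand in (\ref{Hm}). The nonnegative definiteness comes from the pointwise identity
\begin{equation*}
\sum_{i,j=1}^l \zeta_i\overline{\zeta_j}\,F(\Phi U_r(\cdot,p_i))(\xi)\overline{F(\Phi U_r(\cdot,p_j))(\xi)} = \Bigl|\sum_{i=1}^l \zeta_i F(\Phi U_r(\cdot,p_i))(\xi)\Bigr|^2\ge 0,
\end{equation*}
so after multiplying by any $\psi(\xi/|\xi|)\ge 0$ and passing to the $r\to\infty$ limit, the measure $\sum_{i,j}\zeta_i\overline{\zeta_j}\mu^{p_ip_j}$ is nonnegative on every test function of the form $|\Phi|^2\psi$ with $\psi\ge 0$, and hence is a nonnegative Radon measure. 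The main technical obstacle, I expect, is the continuity step in the second paragraph: verifying that the Cauchy bound above really does produce a well-defined Borel measure $\mu^{pq}$ for every pair $(p,q)\in E\times E$ along one common subsequence, rather than merely a functional on a dense subspace of test functions.
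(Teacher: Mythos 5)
The paper does not prove Proposition~\ref{pro2} itself but cites it from \cite{Pa3,Pa5}, and your reconstruction follows the same route used there: the Tartar--G\'erard first-commutation construction for each pair, a diagonal extraction over a countable dense $D\subset E$, and extension to all of $E\times E$ via the $L^2$-continuity estimate in $p$ controlled by $\nu_x((p,p'])$ (which is exactly the mechanism behind the variation bound (\ref{7})), together with the standard quadratic-form identity for Hermitian nonnegativity. Your key estimate is correct (with $C=4$, using that $\theta(u_r-p)-\theta(u_r-p')$ is an indicator and that $U_r(\cdot,p)\rightharpoonup 0$ for $p\in E$), and the final worry about passing from the dense span of products $\Phi_1\overline{\Phi_2}\psi$ to a genuine measure is handled, as in the classical construction you invoke, by Stone--Weierstrass plus the Plancherel bound localized to compacta.
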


We call the family of measures
$\left\{\mu^{pq}\right\}_{p,q\in E}$ the H-measure
corresponding to the subsequence $u_r(x)=u_{k_r}(x)$.

As was demonstrated in \cite{Pa3}, the H-measure $\mu^{pq}=0$ for all $p,q\in E$ if and only if the
subsequence $u_r(x)$ converges as $r\to\infty$ strongly (in $L^1_{loc}(\Omega)$).

Since $|U_k(x,p)|\le 1$, it readily follows from (\ref{Hm}) and Plancherel's equality that $\pr_\Omega|\mu^{pq}|\le\meas$ for $p,q\in E$, where $\meas$ is the Lebesgue measure on $\Omega$, and by $|\mu|$ we denote the variation of a Borel measure $\mu$ (this is the minimal of nonnegative Borel measures $\nu$ such that $|\mu(A)|\le\nu(A)$ for all Borel sets $A$).
This implies the representation $\mu^{pq}=\mu^{pq}_xdx$ (the disintegration of H-measures).
More exactly, choose a countable dense subset $D\subset E$. The following statement was proved in \cite[Proposition~3]{Pa5}, see also \cite[Proposition~3]{PaARMA}.

\begin{proposition}\label{pro3}
There exists a family of complex finite
Borel  measures $\mu^{pq}_x\in \M(S)$
 in the sphere $S$ with $p,q\in D$, $x\in\Omega'$,
where $\Omega'$ is a subset  of $\Omega$ of full measure, such that
$\mu^{pq}=\mu^{pq}_xdx$, that is, for all $\Phi(x,\xi)\in
C_0(\Omega\times S)$ the function
$$
x\to\langle\mu^{pq}_x(\xi),\Phi(x,\xi)\rangle =\int_S \Phi(x,\xi)
d\mu^{pq}_x(\xi)
$$
is Lebesgue-measurable on $\Omega$,  bounded, and
$$
\langle\mu^{pq},\Phi(x,\xi)\rangle =\int_\Omega
\langle\mu^{pq}_x(\xi),\Phi(x,\xi)\rangle dx.
$$
Moreover, for $p,p',q\in D$, $p'>p$
\begin{equation}
\label{7} \Var\mu^{pq}_x\doteq|\mu^{pq}_x|(S)\le 1 \ \mbox{ and } \
\Var(\mu^{p'q}_x-\mu^{pq}_x)\le
2\left(\nu_x((p,p'))\right)^{1/2}.
\end{equation}
\end{proposition}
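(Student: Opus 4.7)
The plan is to disintegrate each complex measure $\mu^{pq}$ on $\Omega\times S$ into fibers $\mu^{pq}_x\in\M(S)$, doing this simultaneously over the countable index set $D\times D$, and then to extract the two bounds in (\ref{7}) from (a) the absolute continuity $\pr_\Omega|\mu^{pq}|\le\meas$ noted just above the proposition and (b) the Hermitian non-negative definiteness from Proposition~\ref{pro2}(ii) applied to a suitable triple of indices.

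First I would build the disintegration. Since $|\mu^{pq}|$ is a locally finite positive Borel measure on $\Omega\times S$ whose projection onto $\Omega$ has Radon--Nikodym density at most~$1$ with respect to Lebesgue measure, a standard argument produces the family: for each $\psi$ in a countable dense subset of $C(S)$, apply Radon--Nikodym to the scalar measure $A\mapsto\mu^{pq}(A\times\psi)$ and reconstruct $\mu^{pq}_x$ on a full-measure set via the Riesz representation theorem. Countability of $D\times D$ lets me take a single $\Omega'$ valid for all pairs simultaneously. The bound $\Var\mu^{pq}_x\le 1$ then drops out: disintegrating the positive measure $|\mu^{pq}|$ itself gives $\int_A|\mu^{pq}|_x(S)\,dx=|\mu^{pq}|(A\times S)\le\meas(A)$ for every Borel $A\subset\Omega$, while $|\mu^{pq}_x|\le|\mu^{pq}|_x$ pointwise.

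For the increment bound, $\mu^{p'q}-\mu^{pq}$ is the mixed H-measure of the pair $(W_r,U_r(\cdot,q))$ with $W_r(x)=U_r(x,p')-U_r(x,p)=-\chi_{(p,p']}(u_r(x))+\nu_x((p,p'])$, and I write $M^{WW}$ for the diagonal H-measure of $W_r$. Proposition~\ref{pro2}(ii) applied to the triple $(p,p',q)$ with coefficients $(-1,1,\lambda)$ forces the $2\times 2$ Hermitian matrix with entries $M^{WW}$, $\mu^{p'q}-\mu^{pq}$, $\overline{\mu^{p'q}-\mu^{pq}}$, $\mu^{qq}$ to be non-negative definite. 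A scalar optimization on non-negative continuous test functions, upgraded to Borel sets by monotone approximation of $\chi_A$ and then to total variation by countable partitioning plus discrete Cauchy--Schwarz, yields $|\mu^{p'q}-\mu^{pq}|(A\times S)^2\le M^{WW}(A\times S)\cdot\mu^{qq}(A\times S)$. Plancherel applied to (\ref{Hm}) gives $M^{WW}(A\times S)\le\int_A\nu_x((p,p'])(1-\nu_x((p,p']))\,dx$ and $\mu^{qq}(A\times S)\le\meas(A)$; Lebesgue differentiation at $x$ then recovers the pointwise bound, with $\nu_x((p,p'])=\nu_x((p,p'))$ a.e.\ since $p'\in E$ implies $\nu_x(\{p'\})=0$ a.e.

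The hard part is the variation-norm Cauchy--Schwarz upgrade: the scalar optimization in $\zeta$ is immediate on non-negative continuous test functions, but extending it to Borel sets (by approximating $\chi_A$ from above by continuous functions) and then to the variation of a complex measure (by partitioning $A$ and applying discrete Cauchy--Schwarz) is the one nontrivial measure-theoretic step. The extra factor $2$ in the stated bound appears to reflect slack in the diagonal estimate of $M^{WW}$ and is irrelevant for later applications.
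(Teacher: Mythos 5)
Your proposal is correct, and it reaches both assertions of the proposition, but the route to the increment bound differs from the one in the sources the paper cites (the paper itself gives no proof of Proposition~\ref{pro3}; it quotes \cite[Proposition~3]{Pa5} and \cite[Proposition~3]{PaARMA}). There, the fibers $\mu^{pq}_x$ are produced concretely via the mollifier representation (\ref{repr}) at common Lebesgue points of the countable family $x\mapsto\langle\mu^{pq},K_m(x-\cdot)\psi\rangle$, $\psi$ in a dense subset of $C(S)$, and both inequalities in (\ref{7}) come directly from Plancherel's identity and the Bunyakovskii inequality applied to the defining Fourier integrals: for the increment one bounds $\|\Phi_m(U_r(\cdot,p')-U_r(\cdot,p))\|_2$ exactly as in (\ref{32})--(\ref{33}) of the present paper, via the crude estimate $W_r^2\le 2\bigl(\chi_{(p,p']}(u_r)+\nu_y((p,p'])\bigr)$, which is precisely where the factor $2$ originates; the variation bound is then immediate since $\Var\mu=\sup_{\|\psi\|_\infty\le1}|\langle\mu,\psi\rangle|$. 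You instead abstract the disintegration through Radon--Nikodym plus Riesz (equivalent, and fine given $\pr_\Omega|\mu^{pq}|\le\meas$), and derive the increment bound from the nonnegative definiteness of Proposition~\ref{pro2}(ii) applied to $(p,p',q)$ with coefficients $(-\zeta,\zeta,\eta)$, followed by the set-wise Cauchy--Schwarz and its upgrade to total variation. This is valid --- the $2\times2$ matrix you describe is indeed obtained by bilinearity, $M^{WW}=\mu^{p'p'}-\mu^{p'p}-\mu^{pp'}+\mu^{pp}$ exists, and your exact computation of the weak limit of $W_r^2$ as $\nu_x((p,p'])(1-\nu_x((p,p']))$ even yields the sharper constant $1$ in place of $2$; the endpoint issue $\nu_x(\{p'\})=0$ a.e.\ for $p'\in D\subset E$ is correctly disposed of. What your route costs is the extra measure-theoretic step you flag (Borel approximation, partition plus discrete Cauchy--Schwarz, and the identification of the fiber of the variation with the variation of the fiber), which the Plancherel route avoids entirely; note that this upgrade is the same device the paper itself uses to pass from (\ref{pd1}) to (\ref{pd2})--(\ref{pd3}), so nothing new is needed.
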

We choose a non-negative function $K(x)\in C_0^\infty (\R^n)$
 with support in the unit ball such that $\int K(x)dx=1$ and set $K_m(x)=m^n K(mx)$ for
$m\in\N$.
 Clearly, the sequence of $K_m$ converges
in  $\D'(\R^n)$  to the Dirac $\delta$-function (~that is, this
sequence is an approximate unity~). We define $\Phi_m(x)=(K_m(x))^{1/2}$. As was shown in \cite[Remark~4]{Pa5}
(see also \cite[Remark~2(b)]{PaARMA}~), the measures $\mu^{pq}_x$ can be explicitly represented by the relation
\begin{eqnarray}\label{repr}
\Phi(x)\langle\mu_x^{pq},\psi(\xi)\rangle=\lim_{m\to\infty}\langle\mu_x^{pq}(y,\xi),\Phi(y)K_m(x-y)\psi(\xi)\rangle=\nonumber\\
\lim\limits_{m\to\infty} \lim\limits_{r\to\infty} \int_{\R^n}
F(\Phi\Phi_mU_r(\cdot,p))(\xi)\overline{F(\Phi_mU_r(\cdot,q))(\xi)}
\psi\left(\frac{\xi}{|\xi|}\right)d\xi
\end{eqnarray}
for all $\psi(\xi)\in C(S)$,  where
$\Phi\Phi_mU_r^p(y)=\Phi(y)\Phi_m(x-y)U_r(y,p)$, $\Phi_m
U_r^q(y)=\Phi_m(x-y) U_r(y,q)$, and $\Phi(y)\in L^2_{loc}(\Omega)$ be an arbitrary function such that $x$ is its Lebesgue point.

From this representation (with $\Phi\equiv 1$) and Proposition~\ref{pro2}(ii) it follows that
for all $p_1,\ldots,p_l\in D$, $x\in\Omega'$, $\zeta_1,\ldots,\zeta_l\in\C$ the measure
\begin{equation}\label{pd1}
\mu=\sum_{i,j=1}^l \zeta_i\overline{\zeta_j}\mu^{p_ip_j}_x\ge
0.
\end{equation}
Indeed, for every nonnegative $\psi(\xi)\in C(S)$
$$
<\mu(\xi),\psi(\xi)>=\lim_{m\to\infty}\left\langle\sum_{i,j=1}^l \zeta_i\overline{\zeta_j}\mu^{p_ip_j}(y,\xi),K_m(x-y)\psi(\xi)\right\rangle\ge 0.
$$
This, in particular implies, that $\mu^{pp}_x\ge 0$, $\mu^{qp}_x=\overline{\mu^{pq}_x}$,
and for every Borel set $A\subset S$
\begin{equation}\label{pd2}
|\mu^{pq}_x|(A)\le \left(\mu^{pp}_x(A)\mu^{qq}_x(A)\right)^{1/2}
\end{equation}
(~see \cite{Pa5,PaARMA}~).
For completeness we provide below the simple proof of (\ref{pd2}).
In view of (\ref{pd1}) (with l=2)
the matrix $M=\left(\begin{array}{cc} \mu^{pp}_x(A) & \mu^{pq}_x(A) \\ \mu^{qp}_x(A) & \mu^{qq}_x(A)
\end{array}\right)$ is Hermitian and nonnegative definite. Therefore,
$$
\mu^{pp}_x(A)\mu^{qq}_x(A)-|\mu^{pq}_x(A)|^2=\mu^{pp}_x(A)\mu^{qq}_x(A)-\mu^{pq}_x(A)\mu^{qp}_x(A)=\det M\ge 0.
$$
By Young's inequality for any positive constant $c$ and all Borel sets $A\subset S$
$$
|\mu^{pq}_x(A)| \le \left(\mu^{pp}_x(A)\mu^{qq}_x(A)\right)^{1/2}\le \frac{c}{2}\mu^{pp}_x(A)+\frac{1}{2c}\mu^{qq}_x(A).
$$
Since $\mu=\displaystyle\frac{c}{2}\mu^{pp}_x+\frac{1}{2c}\mu^{qq}_x$ is nonnegative Borel measure, it follows from this inequality that the variation $|\mu^{pq}_x|\le\mu$. This implies that
\begin{equation}\label{pd3}
|\mu^{pq}_x|(A)\le \frac{c}{2}\mu^{pp}_x(A)+\frac{1}{2c}\mu^{qq}_x(A) \quad \forall c>0.
\end{equation}
It is easily computed that
$$
\inf_{c>0}\left(\frac{c}{2}\mu^{pp}_x(A)+\frac{1}{2c}\mu^{qq}_x(A)\right)=\left(\mu^{pp}_x(A)\mu^{qq}_x(A)\right)^{1/2}
$$
and (\ref{pd2}) follows from (\ref{pd3}).

\section{Localization principles and the strong precompactness property}
\begin{lemma}\label{lem1}
For each $p,q\in\R$, $x\in\Omega'$ there exist one-sided limits in the space $\M(S)$ of finite Borel measures on $S$ (with the standard norm $\Var\mu$):
\begin{eqnarray*}
\mu^{p'q'}_x\to\mu^{pq+}_x \ \mbox{ as } (p',q')\to (p,q), \quad p',q'\in D, p'>p, q'>q, \\
\mu^{p'q'}_x\to\mu^{pq-}_x \ \mbox{ as } (p',q')\to (p,q), \quad p',q'\in D, p'<p, q'<q.
\end{eqnarray*}
Moreover, $\Var\mu^{pq\pm}\le 1$ and for every Borel set $A\subset S$ and each $p_i\in\R$, $i=1,\ldots,l$ the matrices $\{\mu^{p_ip_j\pm}_x(A)\}_{i,j=1}^l$ are Hermitian and nonnegative definite, that is, the measures
\begin{equation}\label{pos1}
\sum_{i,j=1}^l \zeta_i\overline{\zeta_j}\mu^{p_ip_j\pm}_x\ge 0
\end{equation}
for all complex $\zeta_i\in\C$, $i=1,\ldots,l$.
\end{lemma}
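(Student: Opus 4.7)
The plan is to produce the one-sided limits as Cauchy limits in the normed space $\M(S)$ (equipped with the total variation norm), after which the stated properties transfer to the limit by continuity and closedness. The available tools are the quantitative inequality in (\ref{7}), valid for indices in $D$, and the Hermitian identity $\mu^{pq}_x=\overline{\mu^{qp}_x}$ noted after (\ref{pd1}).

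Fix $p,q\in\R$. For $p_1,p_2,q_1,q_2\in D$ with $p<p_1\le p_2$ and $q<q_1\le q_2$ I split
$$
\mu^{p_2q_2}_x-\mu^{p_1q_1}_x=\left(\mu^{p_2q_2}_x-\mu^{p_1q_2}_x\right)+\left(\mu^{p_1q_2}_x-\mu^{p_1q_1}_x\right).
$$
The first summand is controlled directly by the second inequality in (\ref{7}), giving $\Var\le 2(\nu_x((p_1,p_2)))^{1/2}$. The second summand, after complex conjugation via the Hermitian identity, becomes $\overline{\mu^{q_2p_1}_x-\mu^{q_1p_1}_x}$, to which (\ref{7}) applies in the $q$-slot, giving $\Var\le 2(\nu_x((q_1,q_2)))^{1/2}$. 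Since $\nu_x$ is a finite Borel measure, $\nu_x((p_1,p_2))\le\nu_x((p,p_2))\to 0$ as $p_2\to p+$ (continuity from above, using $\bigcap_{p_2>p}(p,p_2)=\emptyset$), and analogously for $(q_1,q_2)$. Hence the net $\{\mu^{p'q'}_x\}$ is Cauchy as $(p',q')\to(p,q)$ from the open upper-right quadrant and converges in $\M(S)$ to some limit, which I call $\mu^{pq+}_x$. The lower limit $\mu^{pq-}_x$ is defined symmetrically by reversing the inequalities.

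The bound $\Var\mu^{pq\pm}_x\le 1$ follows at once from continuity of the norm, since $\Var\mu^{p'q'}_x\le 1$ for $p',q'\in D$ by (\ref{7}). For the positive-definiteness (\ref{pos1}), I fix $p_1,\ldots,p_l\in\R$ and choose sequences $p_i^{(n)}\in D$ with $p_i^{(n)}\to p_i+$ (resp.\ $-$). By (\ref{pd1}) at each $n$, for every Borel set $A\subset S$ the matrix $\bigl\{\mu^{p_i^{(n)}p_j^{(n)}}_x(A)\bigr\}_{i,j=1}^l$ is Hermitian and nonnegative definite. Because total-variation convergence implies setwise convergence, these entries tend to $\mu^{p_ip_j\pm}_x(A)$, and the limit matrix inherits both properties. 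As $A$ is arbitrary, the signed combination in (\ref{pos1}) is a nonnegative Borel measure on $S$.

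The principal technical obstacle is the simultaneous two-variable passage to the limit: inequality (\ref{7}) controls the variation only in the first slot with the second one held fixed, so one cannot pass to the limit in $(p',q')$ in a single step. The decoupling splitting above, combined with the Hermitian identity, resolves this by reducing the $q$-estimate to the known $p$-estimate after a complex conjugation.
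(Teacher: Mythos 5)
Your argument is correct and follows essentially the same route as the paper: the same decoupling of the two-variable increment into a $p$-increment and a $q$-increment handled via the Hermitian identity $\mu^{qp}_x=\overline{\mu^{pq}_x}$ together with the estimate (\ref{7}), the Cauchy criterion in $\M(S)$, and passage to the limit in (\ref{pd1}) for the positivity. The only difference is cosmetic — you keep the square roots from (\ref{7}) and spell out the monotone-convergence step for $\nu_x$, which the paper leaves implicit.
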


\begin{proof}
Let $x\in\Omega'$, $p,q\in\R$, $p_1,q_1,p_2,q_2\in D$, $p_2>p_1>p$, $q_2>q_1>q$. Then, in view of (\ref{7}) and the equality $\mu^{qp}_x=\overline{\mu^{pq}_x}$,
\begin{eqnarray*}
\Var(\mu^{p_2q_2}_x-\mu^{p_1q_1}_x)\le 2\nu_x((p_1,p_2))+2\nu_x((q_1,q_2))\le \\ 2\nu_x((p,p_2))+2\nu_x((q,q_2))\mathop{\to}_{(p_2,q_2)\to (p,q)} 0.
\end{eqnarray*}
By the Cauchy criterion, this implies that there exists a limit $\mu^{pq+}_x$ in $\M(S)$ as $(p',q')\to (p,q)$,
$p',q'\in D$, $p'>p$, $q'>q$. Similarly, for each $p_1,q_1,p_2,q_2\in D$ such that $p_2<p_1<p$, $q_2<q_1<q$
\begin{eqnarray*}
\Var(\mu^{p_2q_2}_x-\mu^{p_1q_1}_x)\le 2\nu_x((p_2,p_1))+2\nu_x((q_2,q_1))\le \\ 2\nu_x((p_2,p))+2\nu_x((q_2,q))\mathop{\to}_{(p_2,q_2)\to (p,q)} 0,
\end{eqnarray*}
which implies existence of a left-sided limit $\mu^{pq-}_x$ in $\M(S)$ as $(p',q')\to (p,q)$,
$p',q'\in D$, $p'<p$, $q'<q$.
By Proposition~\ref{pro3} \ $\Var\mu^{p'q'}_x\le 1$, which implies in the limits as $p'\to p\pm$, $q'\to q\pm$ that
$\Var\mu^{pq\pm}_x\le 1$. Finally,  for every $p_i'\in D$, $\zeta_i\in\C$, $i=1,\ldots,l$ the measures
$$
\sum_{i,j=1}^l \zeta_i\overline{\zeta_j}\mu^{p'_ip'_j\pm}_x\ge 0.
$$
In the limits as $p'_i\to p_i\pm$ this implies (\ref{pos1}).
\end{proof}

\begin{corollary}\label{cor1}
Let $p,q\in\R$, $x\in\Omega'$. Then for every Borel set $A\subset S$
\begin{equation}\label{po1}
|\mu^{pq+}_x|(A)\le\left(\mu^{pp+}_x(A)\mu^{qq+}_x(A)\right)^{1/2}, \ |\mu^{pq-}_x|(A)\le\left(\mu^{pp-}_x(A)\mu^{qq-}_x(A)\right)^{1/2}.
\end{equation}
\end{corollary}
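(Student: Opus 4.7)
The plan is to pass to the limit in the already-proven variation inequality (\ref{pd2}), using the total variation convergence of the one-sided limits furnished by Lemma~\ref{lem1}.

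Fix $p,q\in\R$, $x\in\Omega'$, and a Borel set $A\subset S$. Choose sequences $p_n,q_n\in D$ with $p_n>p$, $q_n>q$ and $(p_n,q_n)\to(p,q)$. By Lemma~\ref{lem1}, the measures $\mu^{p_nq_n}_x$, $\mu^{p_np_n}_x$, $\mu^{q_nq_n}_x$ converge in the norm of $\M(S)$ to $\mu^{pq+}_x$, $\mu^{pp+}_x$, $\mu^{qq+}_x$ respectively. Since $|\mu_n-\mu|(A)\le\Var(\mu_n-\mu)$ and $\bigl||\mu_n|(A)-|\mu|(A)\bigr|\le|\mu_n-\mu|(A)$ (the latter from $|\mu_n|\le|\mu|+|\mu_n-\mu|$ and the symmetric bound), total variation convergence yields both $\mu^{p_nq_n}_x(A)\to\mu^{pq+}_x(A)$ and $|\mu^{p_nq_n}_x|(A)\to|\mu^{pq+}_x|(A)$. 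For the diagonal terms, (\ref{pos1}) with $l=1$ gives $\mu^{pp+}_x\ge 0$ and $\mu^{qq+}_x\ge 0$, so their variations coincide with themselves; similarly $\mu^{p_np_n}_x\ge 0$ and $\mu^{q_nq_n}_x\ge 0$, and we obtain $\mu^{p_np_n}_x(A)\to\mu^{pp+}_x(A)$, $\mu^{q_nq_n}_x(A)\to\mu^{qq+}_x(A)$.

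Now apply (\ref{pd2}) to the indices $p_n,q_n\in D$, yielding
$$
|\mu^{p_nq_n}_x|(A)\le\bigl(\mu^{p_np_n}_x(A)\,\mu^{q_nq_n}_x(A)\bigr)^{1/2},
$$
and let $n\to\infty$; this produces the first inequality in (\ref{po1}). The $-$ case is treated identically with sequences $p_n,q_n\in D$ satisfying $p_n<p$, $q_n<q$.

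I do not anticipate any substantive obstacle: Lemma~\ref{lem1} already delivers the total variation convergence and the nonnegative-definiteness needed to make the diagonal limits nonnegative measures, so only a routine limit passage remains. An equally short alternative would be to avoid invoking (\ref{pd2}) and instead mimic its proof at the level of the one-sided limits directly, applying Hermitian nonnegative-definiteness of the $2\times 2$ matrix $\{\mu^{p_ip_j\pm}_x(A)\}_{i,j=1}^2$ from (\ref{pos1}) with $l=2$, and then optimizing Young's inequality in the constant $c>0$ exactly as in the derivation of (\ref{pd3}); but the limiting argument above is more economical.
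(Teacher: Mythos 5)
Your proof is correct, but it takes a different route from the paper. The paper's own proof is a one-liner: it observes that (\ref{po1}) follows from (\ref{pos1}) ``in the same way as in the proof of inequality (\ref{pd2})'' --- i.e.\ precisely the alternative you sketch at the end: the $2\times 2$ matrix $\{\mu^{p_ip_j\pm}_x(A)\}$ is Hermitian nonnegative definite by (\ref{pos1}) with $l=2$, one passes from the pointwise determinant inequality to the variation via Young's inequality with parameter $c$, and optimizes in $c$. Your primary argument instead passes to the limit in the already-established inequality (\ref{pd2}) along sequences $p_n\to p+$, $q_n\to q+$ in $D$, using the total-variation convergence from Lemma~\ref{lem1}. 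The limit passage is sound: the diagonal convergences $\mu^{p_np_n}_x\to\mu^{pp+}_x$ are covered by Lemma~\ref{lem1} (take $p'=q'=p_n$), the nonnegativity of the diagonal measures on both sides follows from (\ref{pd1}) and (\ref{pos1}) with $l=1$, and your estimate $\bigl||\mu_n|(A)-|\mu|(A)\bigr|\le\Var(\mu_n-\mu)$ is the one nontrivial measure-theoretic step, correctly justified via $|\mu_n|\le|\mu|+|\mu_n-\mu|$ and its symmetric counterpart. What the paper's route buys is that no limit passage in $A$-dependent quantities is needed at all --- all the analysis is already packaged into (\ref{pos1}) --- whereas your route buys a proof that never reopens the Young-inequality/optimization machinery; both are legitimate, and your own closing remark correctly identifies the paper's choice.
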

\begin{proof}
Relations (\ref{po1}) follow from (\ref{pos1}) in the same way as in the proof of inequality (\ref{pd2}) above.
\end{proof}
\begin{remark}\label{rem2}
By continuity of $\mu^{pq}_x$ with respect to variables $p,q\in D$, we see that for
$p\in D$
$$
\mu^{pq\pm}_x=\lim_{q'\to q\pm}\lim_{p'\to p\pm}\mu^{p'q'}_x=\lim_{q'\to q\pm}\mu^{pq'}_x \ \mbox{ in } \M(S).
$$
Analogously, if $q\in D$, then
$$
\mu^{pq\pm}_x=\lim_{p'\to p\pm}\mu^{p'q}_x \ \mbox{ in } \M(S).
$$
If the both indices $p,q\in D$, then evidently $\mu^{pq\pm}_x=\mu^{pq}_x$.
\end{remark}

Now we suppose that $f(y,\lambda)\in L^2_{loc}(\Omega,C(\R,\R^n))$ is a Caratheodory vector-function on
$\Omega\times\R$. In particular,
\begin{equation}
\label{growth} \forall M>0 \quad
\|f(x,\cdot)\|_{M,\infty}=\max\limits_{|\lambda|\le
M}|f(x,\lambda)|=\alpha_M(x)\in L^2_{loc}(\Omega).
\end{equation}
Since the space $C(\R,\R^n)$ is separable with respect to the
standard locally convex topology generated by seminorms
$\|\cdot\|_{M,\infty}$, then, by the Pettis theorem (see \cite{HF},
Chapter~3), the map $x\to F(x)=f(x,\cdot)\in C(\R,\R^n)$ is strongly
measurable and in view of estimate (\ref{growth}) we see that
$|F(x)|^2\in
L^1_{loc}(\Omega,C(\R))$. In particular (see \cite{HF}, Chapter~3),
the set $\Omega_f$ of common Lebesgue points of the maps
$F(x),|F(x)|^2$ has full measure. As was demonstrated in \cite{PaARMA}, for $x\in\Omega_f$
\begin{equation}
\label{leb} \lim_{m\to\infty}\int
K_m(x-y)\|F(x)-F(y)\|_{M,\infty}^2dy=0 \quad \forall M>0.
\end{equation}
Clearly, each $x\in\Omega_f$ is a common Lebesgue point of all functions
$x\to f(x,\lambda)$, $\lambda\in\R$. Let
$\Omega''=\Omega'\cap\Omega_f$, $\gamma_x^r(\lambda)=\delta(\lambda-u_r(x))-\nu_x(\lambda)$.

Suppose that $x\in\Omega''$, $p\in\R$, $H_+,H_-$ are the minimal linear subspaces of $\R^n$, containing supports of the measures $\mu^{pp+}_x$, $\mu^{pp-}_x$, respectively. We fix $q\in D$ and introduce for $p'\in D$ the function
\begin{equation}\label{Ir}
I_r(y,p')=\int f(y,\lambda)(\theta(\lambda-p')-\theta(\lambda-q))d\gamma_y^r(\lambda)\in L^2_{loc}(\Omega).
\end{equation}

\begin{proposition}\label{pro4}
Assume that $q>p$ and $f(x,\lambda)\in H_+^\bot$ for all $\lambda\in\R$. Then
\begin{equation}\label{lim+}
\lim_{p'\to p+}\lim_{m\to\infty} \lim_{r\to\infty}
\int_{\R^n}\frac{\xi}{|\xi|}\cdot
F(\Phi_mI_r(\cdot,p'))(\xi)\overline{F(\Phi_m
U_r(\cdot,p'))(\xi)}\psi\left(\frac{\xi}{|\xi|}\right)d\xi=0
\end{equation}
for all $\psi(\xi)\in C(S)$.
Analogously, if $q<p$ and $f(x,\lambda)\in H_-^\bot$ $\forall\lambda\in\R$, then $\forall \psi(\xi)\in C(S)$
\begin{equation}\label{lim-}
\lim_{p'\to p-}\lim_{m\to\infty} \lim_{r\to\infty}
\int_{\R^n}\frac{\xi}{|\xi|}\cdot
F(\Phi_mI_r(\cdot,p'))(\xi)\overline{F(\Phi_m
U_r(\cdot,p'))(\xi)}\psi\left(\frac{\xi}{|\xi|}\right)d\xi=0.
\end{equation}
 Here $\Phi_m=\Phi_m(x-y)=\sqrt{K_m(x-y)}$ and $I_r(y,p'), U_r(y,p')$
 are functions of the variable $y\in\Omega$.
\end{proposition}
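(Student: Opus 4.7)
The plan is to approximate $I_r(y,p')$ by a step function in $\lambda$ using a partition of $[p',q]$ with nodes in $D$, identify the limit via the H-measure representation (\ref{repr}), rearrange via Abel summation, and exploit the localisation of $\mu^{\lambda p+}_x$ inside $H_+$ combined with the orthogonality $f(x,\lambda)\in H_+^\perp$. I treat (\ref{lim+}); (\ref{lim-}) is identical after replacing $+$ by $-$ throughout.

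Fix a partition $p'=\lambda_0<\lambda_1<\cdots<\lambda_N=q$ with $\lambda_k\in D$ and mesh $\delta$. Since $\gamma_y^r((\lambda_{k-1},\lambda_k])=U_r(y,\lambda_{k-1})-U_r(y,\lambda_k)$, I decompose
\begin{equation*}
I_r(y,p')=\sum_{k=1}^N f(y,\lambda_k)\bigl[U_r(y,\lambda_{k-1})-U_r(y,\lambda_k)\bigr]+R_r^N(y,p'),
\end{equation*}
with pointwise bound $|R_r^N(y,p')|\le 2\omega_y(\delta)$, where $\omega_y$ is the modulus of continuity of $f(y,\cdot)$ on $[p',q]$. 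The triangle inequality $\omega_y(\delta)\le 2\|F(y)-F(x)\|_{M,\infty}+\omega_x(\delta)$, the Lebesgue-point estimate (\ref{leb}), and Plancherel show that the contribution of $R_r^N$ to the integral in (\ref{lim+}) is at most $C\omega_x(\delta)+o_m(1)$, uniformly in $r$ and $p'$; continuity of $f(x,\cdot)$ yields $\omega_x(\delta)\to 0$ as $\delta\to 0$. In each partition term I then replace the coefficient $f(y,\lambda_k)$ by the constant $f(x,\lambda_k)$, and (\ref{leb}) together with Plancherel and $|U_r|\le 1$ force the resulting error to vanish as $m\to\infty$. Applying the representation (\ref{repr}) with $\Phi\equiv 1$ componentwise (using the scalar symbols $\xi_j\psi(\xi)\in C(S)$), the surviving sum satisfies, for each fixed partition,
\begin{equation*}
\lim_{m\to\infty}\lim_{r\to\infty}\int(\cdots)\,d\xi=\sum_{k=1}^N f(x,\lambda_k)\cdot\int_S\xi\,\psi(\xi)\,d\bigl(\mu^{\lambda_{k-1}p'}_x-\mu^{\lambda_k p'}_x\bigr)(\xi).
\end{equation*}

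Writing $\Psi(\lambda)=\int_S\xi\,\psi(\xi)\,d\mu^{\lambda p'}_x\in\C^n$, Abel summation transforms the right-hand side into
\begin{equation*}
f(x,\lambda_1)\cdot\Psi(p')-f(x,q)\cdot\Psi(q)+\sum_{k=1}^{N-1}\bigl(f(x,\lambda_{k+1})-f(x,\lambda_k)\bigr)\cdot\Psi(\lambda_k).
\end{equation*}
By Lemma~\ref{lem1} and Remark~\ref{rem2}, $\mu^{p'p'}_x\to\mu^{pp+}_x$ and $\mu^{\lambda p'}_x\to\mu^{\lambda p+}_x$ in $\M(S)$ as $p'\to p+$, for each fixed $\lambda\in D$. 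Corollary~\ref{cor1} gives the set-by-set bound $|\mu^{\lambda p+}_x|(A)\le(\mu^{\lambda\lambda}_x(A)\mu^{pp+}_x(A))^{1/2}$; since $\supp\mu^{pp+}_x\subset H_+$ by the very definition of $H_+$, this forces $\supp\mu^{\lambda p+}_x\subset H_+$ for every $\lambda$. The hypothesis $f(x,\lambda)\in H_+^\perp$ then gives $f(x,\lambda)\cdot\xi=0$ on the support of each of the limiting measures (and similarly for the differences $f(x,\lambda_{k+1})-f(x,\lambda_k)\in H_+^\perp$), so every term of the Abel-summed expression vanishes in the limit $p'\to p+$. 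For each fixed partition the iterated limit is therefore $0$; combining with the remainder estimate above and letting $\delta\to 0$ proves (\ref{lim+}).

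The main technical obstacle is controlling the partition remainder $R_r^N$ uniformly across the three successive limits: the inequality $\omega_y(\delta)\le 2\|F(y)-F(x)\|_{M,\infty}+\omega_x(\delta)$ together with (\ref{leb}) is precisely what converts the pointwise bound on $R_r^N$ into an $L^2$ quantity that becomes small after $m\to\infty$ and $\delta\to 0$, so that the partition approximation commutes with the triple limit defining the proposition.
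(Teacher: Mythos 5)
Your proof is correct and follows essentially the same route as the paper's: approximate $I_r$ by a step-function combination of the $U_r(\cdot,\lambda_k)$, freeze $y=x$ in the coefficients via the Lebesgue-point relation (\ref{leb}) and Plancherel, identify the iterated limit through the representation (\ref{repr}), and annihilate each term using $\supp\mu^{\lambda p+}_x\subset\supp\mu^{pp+}_x\subset H_+$ (from Corollary~\ref{cor1}) against $f(x,\cdot)\in H_+^\bot$. The only cosmetic difference is that the paper writes the uniform step approximant directly as $\sum_i v_i\theta(\lambda-p_i)$ with $v_i\in H_+^\bot$, so that $J_r=\sum_i v_iU_r(\cdot,p_i')$ without your Abel-summation step.
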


\begin{proof}  Note that starting from some index $m$ the supports of
the functions $\Phi_m(x-y)$ lie in  some  compact subset $B$ of $\Omega$.
 Without loss of generality  we  can
assume  that  $\supp\Phi_m\subset B$
  for all $m\in\N$. Let
$$
\tilde I_r(y,p')=\int f(x,\lambda)(\theta(\lambda-p')-\theta(\lambda-q))d\gamma_y^r(\lambda)\in L^2_{loc}(\Omega),
$$
$M=\sup\limits_{r\in\N}\|u_r\|_\infty$. Then $\supp \gamma^r_y\subset [-M,M]$, and
  $$|I_r(y,p')-\tilde I_r(y,p')|\le \int
  |f(y,\lambda)-f(x,\lambda)|d |\gamma^r_y|(\lambda)\le
  2\|F(y)-F(x)\|_{M,\infty}.$$
By Plancherel's identity
\begin{eqnarray*}
\left|\int_{\R^n}\frac{\xi}{|\xi|}\cdot
F(\Phi_mI_r(\cdot,p'))(\xi) \overline{F(\Phi_m
U_r(\cdot,p'))(\xi)}\psi\left(\frac{\xi}{|\xi|}\right)d\xi-\right. \\
\left.
\int_{\R^n}\frac{\xi}{|\xi|}\cdot
F(\Phi_m\tilde I_r(\cdot,p'))(\xi) \overline{F(\Phi_m
U_r(\cdot,p'))(\xi)}\psi\left(\frac{\xi}{|\xi|}\right)d\xi\right|= \\
\left|\int_{\R^n}\frac{\xi}{|\xi|}\cdot
F(\Phi_m(I_r(\cdot,p')-\tilde I_r(\cdot,p')))(\xi) \overline{F(\Phi_m
U_r(\cdot,p'))(\xi)}\psi\left(\frac{\xi}{|\xi|}\right)d\xi\right|
\le
\\ \|\psi\|_\infty\|\Phi_m(I_r(\cdot,p')-\tilde I_r(\cdot,p'))\|_2\|\Phi_m
U_r(\cdot,p')\|_2\le \\  \|\psi\|_\infty\|\Phi_m(I_r(\cdot,p')-\tilde I_r(\cdot,p'))\|_2\le \\
2\|\psi\|_\infty\left(\int K_m(x-y)\|F(y)-F(x)\|_{M,\infty}^2 dy\right)^{1/2}.
\end{eqnarray*}
Here we take account  of the equality
 $$
\|\Phi_m\|_2=\left(\int_\Omega K_m(x-y)dy\right)^{1/2}=1.
$$
From the above estimate and (\ref{leb}) it follows that
\begin{eqnarray}
\label{red} \lim\limits_{m\to\infty}\lim\limits_{r\to\infty}\!
\left|\int_{\R^n}\!\frac{\xi}{|\xi|}\cdot
F(\Phi_mI_r(\cdot,p'))(\xi) \overline{F(\Phi_mU_r(\cdot,p'))(\xi)}\psi\left(\frac{\xi}{|\xi|}\right)d\xi-\right. \nonumber\\
\left.\int_{\R^n}\frac{\xi}{|\xi|}\cdot F(\Phi_m\tilde I_r(\cdot,p'))(\xi)\overline{F(\Phi_m
U_r(\cdot,p'))(\xi)}\psi\left(\frac{\xi}{|\xi|}\right)d\xi\right|=0.
\end{eqnarray}
Observe that the function $\tilde f(\lambda)=f(x,\lambda)\in C(\R,H_+^\bot)$ is continuous and does not depend on $y$.
Therefore for any $\varepsilon>0$  there exists a piece-wise constant vector-valued
function $g(\lambda)$ of the form
$g(\lambda)=\sum\limits_{i=1}^kv_i\theta(\lambda-p_i),$
 where $v_i\in H_+^\bot$, $p=p_1<p_2<\cdots<p_k=q$  such
that $\|\tilde f\chi-g\|_\infty\le\varepsilon$ on $\R$. Here $\chi(\lambda)=\theta(\lambda-p)-\theta(\lambda-q)$. Moreover, by the density of $D$, we may suppose that $p_i\in D$ for $i>1$.
We define for $p'\in D\cap (p,p_2)$
$$
J_r(y,p')=\int g(\lambda)\theta(\lambda-p')d\gamma_y^r(\lambda).
$$

Using again Plancherel's identity and the fact that
\begin{eqnarray*}
|\tilde I_r(y,p')-J_r(y,p')|=
\left|\int(\tilde f\cdot\chi-g)(\lambda)\theta(\lambda-p')d\gamma_y^r(\lambda)\right|\le \\ \int|(\tilde
f\cdot\chi-g)(\lambda)|d |\gamma_y^r|(\lambda)\le 2\varepsilon,
\end{eqnarray*}
we obtain
\begin{eqnarray}\label{20}
\left|\int_{\R^n}\frac{\xi}{|\xi|}\cdot F(\Phi_m\tilde I_r(\cdot,p'))(\xi)\overline{F(\Phi_m U_r(\cdot,p'))(\xi)}\psi
\left(\frac{\xi}{|\xi|}\right)d\xi\right.-\nonumber\\
\left.\int_{\R^n}\frac{\xi}{|\xi|}\cdot F(\Phi_m J_r(\cdot,p'))(\xi)\overline{F(\Phi_m U_r(\cdot,p'))(\xi)}\psi
\left(\frac{\xi}{|\xi|}\right)d\xi\right|=\nonumber\\
\left|\int_{\R^n}\frac{\xi}{|\xi|}\cdot F(\Phi_m(\tilde I_r(\cdot,p')-J_r(\cdot,p')))(\xi)\overline{F(\Phi_m U_r(\cdot,p'))(\xi)}\psi
\left(\frac{\xi}{|\xi|}\right)d\xi\right|\le \nonumber\\
\|\Phi_m(\tilde I_r(\cdot,p')-J_r(\cdot,p'))\|_2
\cdot\|\Phi_mU_r(\cdot,p')\|_2\cdot\|\psi\|_\infty\le 2\|\psi\|_\infty\varepsilon
\end{eqnarray}
 for all $\psi(\xi)\in C(S)$.
Since
$$
J_r(y,p')=\int\left(\sum\limits_{i=1}^k
v_i\theta(\lambda-p_i')\right)
d\gamma_y^r(\lambda)=\sum\limits_{i=1}^k v_iU_r(y,p_i'),
$$
where $p_i'=\max(p_i,p')\in D$, it follows from (\ref{repr}) with account of Remark~\ref{rem2} that
\begin{eqnarray}
\label{21} \lim_{p'\to p+}\lim_{m\to\infty} \lim_{r\to\infty}
\int_{\R^n}\frac{\xi} {|\xi|}\cdot
F(\Phi_m J_r(\cdot,p'))(\xi)\overline{F(\Phi_m
U_r(\cdot,p')(\xi)}\psi\left(\frac{\xi}{|\xi|}\right)d\xi=
\nonumber\\
\lim_{p'\to p+}\sum\limits_{i=1}^k\langle\mu_x^{p_i'p'},(v_i\cdot\xi)
\psi(\xi)\rangle=
\sum\limits_{i=1}^k\langle\mu_x^{p_ip+},(v_i\cdot\xi)
\psi(\xi)\rangle=0.
\end{eqnarray}
The  last  equality is a consequence of the inclusion
$\supp\mu_x^{p_ip+}\subset\supp\mu_x^{pp+}\subset H_+$ (because of Corollary~\ref{cor1})
 combined with the relation $v_i\bot H_+$. By (\ref{red}), (\ref{20}) and
(\ref{21}), we have
  $$
\limsup_{p'\to p+}\limsup\limits_{m\to\infty}\limsup\limits_{r\to\infty}
\left|\int_{\R^n}\frac{\xi}{|\xi|}\cdot F(\Phi_mI_r(\cdot,p'))(\xi)\overline{F(\Phi_m U_r(\cdot,p'))(\xi)}\psi
\left(\frac{\xi}{|\xi|}\right)d\xi\right|\le\const\cdot\varepsilon,
  $$
  and  it  suffices  to observe that $\varepsilon>0$
 can be arbitrary to  complete  the proof of (\ref{lim+}). The proof of relation (\ref{lim-}) is similar to the proof of (\ref{lim+}) and is omitted.
 \end{proof}

Now we assume that the sequence $u_k$ satisfies constraints (\ref{2}). We choose a subsequence $u_r$ and the corresponding H-measure $\mu^{pq}=\mu^{pq}_xdx$. Assume that $x\in\Omega''=\Omega'\cap\Omega_\varphi$, $p_0\in\R$.
As above, let $H_+$, $H_-$ be the minimal linear subspaces of $\R^n$ containing $\supp\mu^{p_0p_0+}_x$, $\supp\mu^{p_0p_0-}_x$, respectively.

\begin{theorem}[localization principle]\label{th3}
There exists a positive $\delta$ such that $(\varphi(x,\lambda)-\varphi(x,p))\cdot\xi=0$ for all $\xi\in H_+$, $\lambda\in [p_0,p_0+\delta]$ and all $\xi\in H_-$, $\lambda\in [p_0-\delta,p_0]$.
\end{theorem}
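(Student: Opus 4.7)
The plan is to combine the constraint~(\ref{2}) with Proposition~\ref{pro4} applied to a suitable orthogonal projection of the flux. For $p',q\in D$ with $p_0<p'<q$, the algebraic identity
\begin{multline*}
\theta(u_r-p')(\varphi(y,u_r)-\varphi(y,p'))-\theta(u_r-q)(\varphi(y,u_r)-\varphi(y,q)) \\
= I_r^{(p',q)}(y)-\varphi(y,p')U_r(y,p')+\varphi(y,q)U_r(y,q)+R(y),
\end{multline*}
where $I_r^{(p',q)}(y)=\int\varphi(y,\lambda)(\theta(\lambda-p')-\theta(\lambda-q))d\gamma^r_y(\lambda)$ is precisely the $I_r$ of Proposition~\ref{pro4} with $f=\varphi$ and $R(y)$ is independent of $r$, combined with~(\ref{2}) at both $p'$ and $q$, forces the divergence of $I_r^{(p',q)}-\varphi(y,p')U_r(\cdot,p')+\varphi(y,q)U_r(\cdot,q)$ to be precompact in $W^{-1}_{d,loc}(\Omega)$. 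Pairing this vector field with $U_r(\cdot,p')$ via the $\Phi_m^2$-multiplier and the $\xi/|\xi|$ symbol, using representation~(\ref{repr}) and the Lebesgue-point property at $x\in\Omega''$ (which replaces $\varphi(y,p'),\varphi(y,q)$ by their values at $x$), one obtains the master identity
$$\lim_m\lim_r\int\frac{\xi}{|\xi|}\cdot F(\Phi_m I_r^{(p',q)})\overline{F(\Phi_m U_r(\cdot,p'))}\psi d\xi = \langle\mu^{p'p'}_x,(\varphi(x,p')\cdot\tfrac{\xi}{|\xi|})\psi\rangle-\langle\mu^{qp'}_x,(\varphi(x,q)\cdot\tfrac{\xi}{|\xi|})\psi\rangle.$$

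Let $P$ denote the orthogonal projection of $\R^n$ onto $H_+^\bot$. Since $(P\varphi)(x,\lambda)\in H_+^\bot$ trivially and $P\varphi$ is Caratheodory, Proposition~\ref{pro4} applied to $f=P\varphi$ shows that the $P$-projected part of the LHS above vanishes as $p'\to p_0+$, so only the $(I-P)\varphi$ component of the LHS survives. Lemma~\ref{lem1} transports the right-hand side to $\langle\mu^{p_0p_0+}_x,(\varphi(x,p_0)\cdot\xi/|\xi|)\psi\rangle-\langle\mu^{qp_0+}_x,(\varphi(x,q)\cdot\xi/|\xi|)\psi\rangle$; and since $\supp\mu^{p_0p_0+}_x\subset H_+$ by definition and $\supp\mu^{qp_0+}_x\subset H_+$ by Corollary~\ref{cor1}, any $\xi\in S\cap H_+$ annihilates $P\varphi$, so both sides of the identity reduce to their $(I-P)\varphi$-components.

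To finish, I mimic the piecewise-constant approximation in the proof of Proposition~\ref{pro4}, this time approximating $(I-P)\varphi(x,\lambda)(\theta(\lambda-p_0)-\theta(\lambda-q))$ to within $\varepsilon$ in $L^\infty(\R)$ by $g(\lambda)=\sum_{i=1}^k v_i\theta(\lambda-p_i)$ with $v_i\in H_+$, $p_0=p_1<p_2<\cdots<p_k=q$, $p_i\in D$ for $i\ge 2$, and $\sum_i v_i=O(\varepsilon)$. The computation leading to~(\ref{21}), now without the $v_i\bot H_+$ cancellation, produces a Riemann--Stieltjes-type sum $\sum_{i=2}^{k-1}\langle\mu^{p_ip_0+}_x,((I-P)[\varphi(x,p_i)-\varphi(x,p_{i-1})]\cdot\xi/|\xi|)\psi\rangle$. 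Matching against the master identity, the boundary terms at $p_0$ and $q$ cancel, forcing this Stieltjes-type sum to vanish as $\varepsilon\to 0$ for every $\psi\in C(S)$ and every $q\in D\cap(p_0,\infty)$. A contradiction argument, exploiting the continuity of $s\mapsto\mu^{sp_0+}_x$ in total variation (Lemma~\ref{lem1}), the bound~(\ref{po1}), and the non-triviality of $\mu^{p_0p_0+}_x$ when $H_+\ne\{0\}$, then forces $(I-P)\varphi(x,\cdot)$ to be constant on some interval $[p_0,p_0+\delta]$, which is equivalent to $(\varphi(x,\lambda)-\varphi(x,p_0))\cdot\xi=0$ for all $\xi\in H_+$, $\lambda\in[p_0,p_0+\delta]$. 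The $H_-$-side is symmetric via~(\ref{lim-}). The main obstacle is this last extraction step: converting the vanishing Stieltjes-type sum into pointwise local constancy of $(I-P)\varphi(x,\cdot)$ requires a careful choice of test $\psi\in C(S)$ to rule out oscillatory cancellations against the $s$-dependent measures $\mu^{sp_0+}_x$.
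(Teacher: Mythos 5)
Your setup is sound and tracks the paper's own argument closely: the difference of the constraints (\ref{2}) at two levels, the pairing with $U_r(\cdot,p')$ through the $\xi/|\xi|$ multiplier, the identification of the boundary terms via (\ref{repr}), and the use of Proposition~\ref{pro4} to annihilate the $H_+^\bot$-component of the flux integral are all exactly the ingredients of the paper's proof (its relations (\ref{27})--(\ref{30})). The gap is in the last step, and you have correctly diagnosed where it is but not closed it. Approximating the $H_+$-component $(\mathrm{I}-P)\varphi$ by a piecewise-constant function and expanding into a Stieltjes-type sum $\sum_i\langle\mu^{p_ip_0+}_x,((\mathrm{I}-P)[\varphi(x,p_i)-\varphi(x,p_{i-1})]\cdot\xi)\psi\rangle$ is a dead end: the only available bound on each cross-measure is $\Var\mu^{p_ip_0+}_x\le 1$, so the sum is controlled only by the total variation of $\lambda\mapsto(\mathrm{I}-P)\varphi(x,\lambda)$ on $[p_0,q]$, which for a merely continuous flux need not be finite, let alone small; and no choice of $\psi$ can rule out cancellations among the $s$-dependent measures $\mu^{sp_0+}_x$, because these measures are genuinely uncontrolled beyond the Cauchy--Schwarz bound (\ref{po1}).

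What the paper does instead is quantitative rather than approximative. It keeps the $H_+$-component $\tilde V_r(y,p)=\pi_1\!\int(\varphi(y,q)-\varphi(y,\lambda))\chi(\lambda)\,d\gamma^r_y(\lambda)$ intact and estimates it in $L^2$ by Bunyakovskii--Plancherel: since $|\tilde V_r(y,p)|\le M_q(y)\int\chi\,d(\nu^r_y+\nu^0_y)$ with $M_q(y)=\max_{\lambda\in[p_0,q]}|\tilde\varphi(y,q)-\tilde\varphi(y,\lambda)|$, one gets (\ref{32})--(\ref{33a}) and hence the bound (\ref{34}): $|(\tilde\varphi(x,q)-\tilde\varphi(x,p_0))\cdot\langle\mu^{p_0p_0+}_x,\xi\psi(\xi)\rangle|\le 2\|\psi\|_\infty M_q(x)\,\omega(q)$ with $\omega(q)=(\nu_x((p_0,q]))^{1/2}\to 0$ as $q\to p_0$. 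Two further ideas are then needed, neither of which appears in your proposal: (a) choosing finitely many $\psi_i$ so that the vectors $\langle\mu^{p_0p_0+}_x,\xi\psi_i(\xi)\rangle$ form an algebraic basis of $H_+$ (this is how ``$\mu^{p_0p_0+}_x$ nontrivial on $H_+$'' is actually exploited), which upgrades the scalar estimates to $|\tilde\varphi(x,q)-\tilde\varphi(x,p_0)|\le c\,\omega(q)M_q(x)$, i.e.\ (\ref{35}); and (b) the self-referential (contraction-type) argument (\ref{36}): since the right-hand side is the oscillation of $\tilde\varphi$ itself multiplied by a factor $c\,\omega(q)$ that can be made $<1/2$ by shrinking $\delta$, evaluating at the point $p'$ where $|\tilde\varphi(x,\cdot)-\tilde\varphi(x,p_0)|$ attains its maximum on $[p_0,p_0+\delta]$ forces that maximum to vanish. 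Without (a) and (b) --- or some substitute for the decisive smallness factor $\omega(q)\to 0$ --- your argument does not reach the conclusion.
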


\begin{proof}
The proof is analogous to the proof of \cite[Theorem 4]{PaARMA} (if $d=2$), for arbitrary $d>1$ see the proof of \cite[Theorem~4]{PaJMS} (where the more general case of ultra-parabolic constraints was treated). For completeness we provide the details. Observe firstly that in view of (\ref{2}) the sequence of distributions
\begin{equation} \label{22a}
 \L_p^r(y)=\div_y
\left(\int\theta(\lambda-p)(\varphi(y,\lambda)-\varphi(y,p))
d\gamma_y^r(\lambda)\right) \mathop{\to}_{r\to\infty} 0 \mbox{ in }
W_{d,loc}^{-1}(\Omega).
\end{equation}
For $p,q\in D$, $q>p>p_0$ we consider the sequence of distributions
$$
\L_q^r-\L_p^r=\div_y(Q_r^p(y)),\quad r\in \N,
$$
where the vector-valued functions $Q_r^p(y)$ (~for fixed $q\in D$~) are as follows:
\begin{eqnarray}
\label{23}
Q_r^p(y)=\int(\varphi(y,\lambda)-\varphi(y,q))\theta(\lambda-q)
d\gamma_y^r(\lambda)-\nonumber \\
\int(\varphi(y,\lambda)-\varphi(y,p))\theta(\lambda-p)d
\gamma_y^r(\lambda)=\nonumber \\
\int(\varphi(y,q)-\varphi(y,\lambda))\chi(\lambda)d\gamma_y^r
(\lambda)- \nonumber\\
\int(\varphi(y,q)-\varphi(y,p))\theta(\lambda-p)d\gamma_y^r(\lambda)=\nonumber\\
\int(\varphi(y,q)-\varphi(y,\lambda))\chi(\lambda)d\gamma_y^r(\lambda)-
(\varphi(y,q)-\varphi(y,p))U_r(y,p);
\end{eqnarray}
here $\chi(\lambda)=\theta(\lambda-p)-\theta(\lambda-q)$ is the indicator function of the segment $(p,q]$.
As was already noted, $\displaystyle
\div_y(Q_r^p(y))\mathop{\to}\limits_{r\to\infty} 0$ in
$W_{d,loc}^{-1}(\Omega)$ and if $\Phi(y)\in C_0^\infty(\Omega)$ then
\begin{equation}
\label{24} \div_y(Q_r^p\Phi(y))\mathop{\to}\limits_{r\to\infty} 0 \
\mbox{ in } W_d^{-1}.
\end{equation}
Using the Fourier transformation, from (\ref{24}) we obtain
\begin{equation}
\label{25} |\xi|^{-1}\xi\cdot F(Q_r^p\Phi)(\xi)=F(g_r), \quad g_r\mathop{\to}_{r\to\infty} 0 \ \mbox{ in }
L^d(\R^n)
\end{equation}
 (see \cite{PaARMA,PaJMS} for details).

Let $\psi(\xi)\in C^\infty(S)$. By the known Marcinkiewicz multiplier theorem (cf.
\cite[Chapter 4]{St}) $\psi(\xi/|\xi|)$ is a Fourier multiplier in $L^s$ for all $s>1$.
This implies that
\begin{equation}\label{25a}
\overline{F(U_r(\cdot,p)\Phi)(\xi)}\psi\left(\frac{\xi}{|\xi|}\right)=\overline{F(h_r)}(\xi),
\end{equation}
where the sequence $h_r$ is bounded in $L^{d'}$, $d'=d/(d-1)$.

By (\ref{25}), (\ref{25a}) we obtain
$$
\int_{\R^n} |\xi|^{-1}\xi\cdot F(Q_r^p\Phi)(\xi)
\overline{F(U_r(\cdot,p)\Phi)(\xi)}\psi\left(\frac{\xi}{|\xi|}\right)=\int_{\R^n}g_r(x)\overline{h_r(x)}dx\to
0
$$
as $r\to\infty$, or in view of (\ref{23}),
\begin{eqnarray}
\label{27} \lim\limits_{r\to\infty}\left\{\int_{\R^n}
|\xi|^{-1}\xi\cdot F(U(\cdot,p)f\Phi)(\xi)
\overline{F(U_r(\cdot,p)\Phi)(\xi)}\psi\left(\frac{\xi}{|\xi|}\right)
d\xi-\right.
\nonumber\\
\left.\int_{\R^n}|\xi|^{-1}\xi\cdot F(V_r(\cdot,p)\Phi)(\xi)
\overline{F(U_r(\cdot,p)\Phi)(\xi)}\psi\left(\frac{\xi}{|\xi|}\right)
d\xi\right\}=0,
\end{eqnarray}
where
$$
f(y)=\varphi(y,q)-\varphi(y,p) \ \mbox{ and } \
V_r(y,p)=\int(\varphi(y,q)-\varphi(y,\lambda))\chi(\lambda)
d\gamma_y^r(\lambda).
$$
Obviously, (\ref{27}) remains valid for merely continuous $\psi(\xi)$.
We set in (\ref{27}) \ $\Phi(y)=\Phi_m(x-y)$ , where the functions
$\Phi_m$ were defined in section~\ref{sec1}, and pass to the limit as $m\to\infty$, $p\to p_0+$.
By (\ref{repr}) with $\Phi(y)=\varphi(y,q)-\varphi(y,p)$ and Lemma~\ref{lem1}, we obtain

\begin{eqnarray*}
\lim_{p\to p_0+}\lim\limits_{m\to\infty} \lim\limits_{r\to\infty}\int_{\R^n}
|\xi|^{-1}\xi\cdot F(U_r(\cdot,p)f\Phi_m)(\xi)
\overline{F(U_r(\cdot,p)\Phi_m)(\xi)}\psi\left(\frac{\xi}{|\xi|}\right)d\xi=
\\
\lim_{p\to p_0+}(\varphi(x,q)-\varphi(x,p))\cdot\langle\mu_x^{pp},\xi\psi(\xi)
\rangle = (\varphi(x,q)-\varphi(x,p_0))\cdot\langle\mu_x^{p_0p_0+},\xi\psi(\xi)
\rangle,
\end{eqnarray*} therefore
\begin{eqnarray}
\label{28}
(\varphi(x,q)-\varphi(x,p_0))\cdot\langle\mu_x^{p_0p_0+},\xi\psi(\xi)
\rangle= \nonumber
\\
\lim_{p\to p_0+}\lim\limits_{m\to\infty} \lim\limits_{r\to\infty}\int_{\R^n}
 |\xi|^{-1}\xi\cdot F(V_r(\cdot,p)\Phi_m)(\xi)
\overline{F(U_r(\cdot,p)\Phi_m)(\xi)}\psi
\left(\frac{\xi}{|\xi|}\right)d\xi.
\end{eqnarray}
Let  $\pi_1$ and $\pi_2$ be the orthogonal projections of $\R^n$
 onto the subspaces $H_+$  and
$H_+^\bot$, respectively; let
$\tilde\varphi(x,\lambda)=\pi_1(\varphi(x,\lambda))$,
$\bar\varphi(x,\lambda)=\pi_2(\varphi(x,\lambda))$.
 Recall that $H_+$ is the smallest subspace containing
$\supp\mu_x^{p_0p_0+}$. This readily implies that $\langle\mu_x^{p_0p_0+},\xi\psi(\xi)\rangle\in H_+$. Hence
\begin{equation}
\label{29} (\varphi(x,q)-\varphi(x,p_0))\cdot
\langle\mu_x^{p_0p_0+},\xi\psi(\xi) \rangle=
(\tilde\varphi(x,q)-\tilde\varphi(x,p_0))\cdot
\langle\mu_x^{p_0p_0+},\xi\psi(\xi)\rangle.
\end{equation}
Further, $V_r(y,p)=\pi_1(V_r(y,p))+\pi_2(V_r(y,p))$ and
\begin{eqnarray*}
\pi_1(V_r(y,p))=\int\left(\tilde\varphi(y,q)-
\tilde\varphi(y,\lambda)\right)\chi(\lambda)d\gamma_y^r(\lambda), \\
\pi_2(V_r(y,p))=\int\left(\bar\varphi(y,q)-
\bar\varphi(y,\lambda)\right)\chi(\lambda)d\gamma_y^r(\lambda).
\end{eqnarray*}
Observe that
$$
\pi_2(V_r(y,p)) =I_r(y,p),
$$
where the function $I_r(y,p)$ is defined in (\ref{Ir}) (with $p'$ replaced by $p$) for a vector-function $f(y,\lambda)=\bar\varphi(y,q)-
\bar\varphi(y,\lambda)\in H_+^\bot$.
By Proposition~\ref{pro4} we obtain
\begin{equation}
\label{30}
\lim_{p\to p_0+} \lim\limits_{m\to\infty} \lim\limits_{r\to\infty}\int_{\R^n}
|\xi|^{-1}\xi\cdot F(\pi_2(V_r(y,p))\Phi_m)(\xi)
\overline{F(U_r(\cdot,p)\Phi_m)(\xi)}\psi
\left(\frac{\xi}{|\xi|}\right)d\xi=0.
\end{equation}
Let $\tilde V_r(y,p)=\pi_1(V_r(y,p))$.
 From (\ref{28}), in view of (\ref{29}) and (\ref{30}), we see that
\begin{eqnarray*}
(\tilde{\varphi}(x,q)-\tilde{\varphi}(x,p_0))\cdot\langle\mu_x^{p_0p_0+},
\xi\psi(\xi) \rangle= \\
\lim_{p\to p_0+}\lim\limits_{m\to\infty} \lim\limits_{r\to\infty}\int_{\R^n}
|\xi|^{-1}\xi\cdot F(\tilde V_r(\cdot,p)\Phi_m)(\xi)
\overline{F(U_r(\cdot,p)\Phi_m)(\xi)}\psi
\left(\frac{\xi}{|\xi|}\right)d\xi,
\end{eqnarray*}
which in turn, by Bunyakovskii inequality
 and Plancherel's equality, gives us
the estimate
\begin{eqnarray}
\label{31} \left|(\tilde\varphi(x,q)-
\tilde\varphi(x,p_0))\cdot\langle\mu_x^{p_0p_0+}, \xi\psi(\xi)
\rangle\right|\le\nonumber\\ \limsup_{p\to p_0+}\limsup\limits_{m\to\infty}
\limsup\limits_{r\to\infty} \|\tilde
V_r(\cdot,p)\Phi_m\|_2\cdot\|U_r(\cdot,p)\Phi_m\|_2\cdot\|\psi\|_\infty\le\nonumber\\
\limsup_{p\to p_0+}\limsup\limits_{m\to\infty}
\limsup\limits_{r\to\infty}\|\tilde
V_r(\cdot,p)\Phi_m\|_2\cdot\|\psi\|_\infty.
\end{eqnarray}
Next, for $\displaystyle M_q(y)=\max\limits_{\lambda\in [p_0,q]} |\tilde\varphi(y,q)- \tilde\varphi(y,\lambda)|$
\begin{eqnarray*}
|\tilde V_r(y,p)|\le M_q(y)
\left|\int\chi(\lambda)d\left(\nu_y^r(\lambda)+\nu_y^0(\lambda)
\right)\right|= \\
M_q(y)(u_r(y,p)-u_r(y,q)+u_0(y,p)-u_0(y,q)).
\end{eqnarray*}
In view of the elementary inequality $(a+b)^2 \le 2(a^2+b^2)$
 and the relation $0\le u_r(y,p)-u_r(y,q)\le 1$,
$r\in\N\cup\{0\}$, we have
\begin{eqnarray}
\label{32} \|\tilde V_r(\cdot,p)\Phi_m\|_2^2\le 2\int_\Omega
(M_q(y))^2\bigl(
(u_r(y,p)-u_r(y,q))^2+\nonumber\\ (u_0(y,p)-u_0(y,q))^2\bigr)K_m(x-y)dy\le\nonumber\\
2\int_\Omega (M_q(y))^2 (u_r(y,p)-u_r(y,q)+\nonumber\\
u_0(y,p)-u_0(y,q)) K_m(x-y)dy.
\end{eqnarray}
 Since $p,q\in D\subset E,$  then
$$
u_r(y,p)-u_r(y,q)\rightharpoonup u_0(y,p)-u_0(y,q)
$$
as  $r\to\infty$  in  the  weak-$*$ topology
 of $L^\infty(\Omega)$ and from (\ref{32}) we  now  obtain  the
estimate
$$
\limsup\limits_{r\to\infty} \|\tilde V_r(\cdot,p)\Phi_m\|_2^2\le
4\int_\Omega (M_q(y))^2 (u_0(y,p)-u_0(y,q))K_m(x-y)dy,
$$
from which, passing to the limit as $m\to\infty,$ we obtain
\begin{equation}\label{33}
\limsup\limits_{m\to\infty} \limsup\limits_{r\to\infty} \|\tilde
V_r(\cdot,p)\Phi_m\|_2^2\le 4(M_q(x))^2(u_0(x,p_0)-u_0(x,p)).
\end{equation}
Here  we  bear  in mind that by the definition of $\Omega'$ (see, for instance, \cite[Proposition~3]{PaARMA}) $x$ is a Lebesgue point of the
functions $u_0(y,p_0)$,  $u_0(y,p)$. It is also used that $x\in \Omega_\varphi$ is a Lebesgue point of the function $(M_q(y))^2$ as well (~this
easily follows from the fact that $x$ is a
Lebesgue point of the maps $y\to\varphi(y,\cdot)$, $y\to
|\varphi(y,\cdot)|^2$ into the spaces $C(\R,\R^n)$, $C(\R)$,
respectively~). From (\ref{33}) in the limit as $p\to p_0$ it follows that
\begin{equation}\label{33a}
\limsup_{p\to p_0}\limsup_{m\to\infty}\limsup_{r\to\infty} \|\tilde
V_r(\cdot,p)\Phi_m\|_2^2\le 4(M_q(x))^2(u_0(x,p_0)-u_0(x,q)).
\end{equation}

In view of
(\ref{31}) and (\ref{33a}),
\begin{eqnarray}
\label{34} |(\tilde\varphi(x,q)-\tilde\varphi(x,p_0))\cdot
\langle\mu_x^{p_0p_0+},\xi\psi(\xi)\rangle|\le 2\|\psi\|_\infty M_q(x)\omega(q), \\
\nonumber \omega(q)=(u_0(x,p_0)-u_0(x,q))^{1/2}=(\nu_x(p_0,q])^{1/2}
\mathop{\to}\limits_{q\to p_0} 0.
\end{eqnarray}
It is clear that the set of vectors
of the form $\langle\mu_x^{p_0p_0+},\xi\psi(\xi)\rangle$,
with real $\psi(\xi)\in C(S)$
 spans the subspace $H_+$.
Hence  we  can choose functions $\psi_i(\xi)\in C(S),$
$i=1,\ldots,l$ such that the vectors
$v_i=\langle\mu_x^{p_0p_0+},\xi\psi_i(\xi)\rangle$   make  up  an
algebraic basis in $H_+$.

By (\ref{34}), for $\psi(\xi)=\psi_i(\xi)$, $i=1,\ldots,l$,  we
obtain
$$
|(\tilde\varphi(x,q)-\tilde\varphi(x,p_0))\cdot v_i |\le
c_i\omega(q)M_q(x), \quad c_i=\const,
$$
and since $v_i$,  $i=1,\ldots,l$ is a basis in $H_+$, these estimates
show that
\begin{eqnarray}
\label{35} |\tilde\varphi(x,q)-\tilde\varphi(x,p_0)|\le c\omega(q)
M_q(x)=\nonumber\\ c\omega(q)\max\limits_{\lambda\in [p_0,q]}
|\tilde\varphi(x,q)-\tilde\varphi(x,\lambda)|, \quad c=\const.
\end{eqnarray}
We take $q=p_0+\delta$, where $\delta>0$ is so small that
$2c\omega(q)=\varepsilon<1$. Then, in view of (\ref{35}),
\begin{equation}
\label{36} |\tilde\varphi(x,q)-\tilde\varphi(x,p_0)|\le
\frac{\varepsilon}{2} \max\limits_{\lambda\in [p_0,p]}
|\tilde\varphi(x,q)-\tilde\varphi(x,\lambda)|,
\end{equation}
and since $\varphi(x,q)$  is continuous with respect to $q$ and the
set $D$ is dense, the estimate (\ref{36}) holds for all
$q\in [p_0,p_0+\delta]$.

 We claim that now  $\tilde\varphi(x,p)=\tilde\varphi(x,p_0)$
  for $p\in[p_0,p_0+\delta]$. Indeed, assume that for
$p'\in [p_0,p_0+\delta]$
$$
|\tilde\varphi(x,p')-\tilde\varphi(x,p_0)|=
\max\limits_{\lambda\in [p_0,p_0+\delta]}
|\tilde\varphi(x,\lambda)-\tilde\varphi(x,p_0)|.
$$
Then for $\lambda\in [p_0,p']$  we have
\begin{eqnarray*}
|\tilde\varphi(x,p')-\tilde\varphi(x,\lambda)|\le
|\tilde\varphi(x,\lambda)-\tilde\varphi(x,p_0)|+ \\
|\tilde\varphi(x,p')-\tilde\varphi(x,p_0)|\le
2|\tilde\varphi(x,p')-\tilde\varphi(x,p_0)|
\end{eqnarray*}
 and
$$
\max\limits_{\lambda\in [p_0,p']}
|\tilde\varphi(x,p')-\tilde\varphi(x,\lambda)|\le
2|\tilde\varphi(x,p')-\tilde\varphi(x,p_0)|.
$$
We now derive from (\ref{36}) with $p=p'$  that
$$
|\tilde\varphi(x,p')-\tilde\varphi(x,p_0)|\le
\varepsilon|\tilde\varphi(x,p')-\tilde\varphi(x,p_0)|,
$$
and since $\varepsilon<1$, this implies that
$$
|\tilde\varphi(x,p')-\tilde\varphi(x,p_0)|= \max\limits_{\lambda\in
[p_0,p_0+\delta]}
|\tilde{\varphi}(x,\lambda)-\tilde{\varphi}(x,p_0)|=0.
$$
We conclude that $\varphi(x,\lambda)-\varphi(x,p_0)\in H_+^\bot$ for
all $\lambda\in [p_0,p_0+\delta]$, i.e.,
$(\varphi(x,\lambda)-\varphi(x,p_0))\cdot\xi=0$ on the
segment $[p_0,p_0+\delta]$ for all $\xi\in H_+$.

To prove that for some sufficiently small $\delta>0$
$(\varphi(x,\lambda)-\varphi(x,p_0))\cdot\xi=0$ on the
segment $[p_0-\delta,p_0]$ for all $\xi\in H_-$, we take $p,q\in D$, $q<p<p_0$ and repeat the reasonings used in the first part of the proof. As a result, we obtain the relation similar to (\ref{34})
$$
|(\tilde\varphi(x,q)-\tilde\varphi(x,p_0))\cdot
\langle\mu_x^{p_0p_0-},\xi\psi(\xi)\rangle|\le 2\|\psi\|_\infty M_q(x)\omega(q),
$$
where
\begin{eqnarray*}
M_q(x)=\max_{\lambda\in [q,p_0]} |\tilde\varphi(y,q)- \tilde\varphi(y,\lambda)|, \\
\omega(q)=\lim_{p\to p_0-}(u_0(x,q)-u_0(x,p))^{1/2}=(\nu_x(q,p_0))^{1/2}
\mathop{\to}\limits_{q\to p_0} 0.
\end{eqnarray*}
This relation readily  implies the desired statement $(\varphi(x,\lambda)-\varphi(x,p_0))\cdot\xi=0$ on the
segment $[p_0-\delta,p_0]$ for all $\xi\in H_-$, where $\delta$ is sufficiently small.

The proof is complete.
\end{proof}

\begin{corollary}\label{cor2}
Let $x\in\Omega''$, $[a,b]$ be the minimal segment, containing $\supp\nu_x$ and $p_0\in (a,b)$. Then, in the notations of Theorem~\ref{th3}, $\supp\mu^{p_0p_0+}_x\cap \supp\mu^{p_0p_0-}_x\not=\emptyset$
and for all $\xi\in H_+\cap H_-$, $\xi\not=0$ the function $\xi\cdot\varphi(x,\lambda)$ is constant in a vicinity of $p_0$.
\end{corollary}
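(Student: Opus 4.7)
The second assertion of the corollary is a direct consequence of Theorem~\ref{th3}. Indeed, any nonzero $\xi\in H_+\cap H_-$ satisfies $\xi\cdot(\varphi(x,\lambda)-\varphi(x,p_0))=0$ on $[p_0,p_0+\delta]$ (using $\xi\in H_+$) and on $[p_0-\delta,p_0]$ (using $\xi\in H_-$), so $\xi\cdot\varphi(x,\lambda)$ is constant on the vicinity $[p_0-\delta,p_0+\delta]$ of $p_0$. The proof thus reduces to establishing the support-intersection statement $\supp\mu^{p_0p_0+}_x\cap\supp\mu^{p_0p_0-}_x\neq\emptyset$.

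The plan is to introduce a mixed one-sided limit $\mu^{p_0p_0-+}_x\in\M(S)$ obtained as $\lim\mu^{pq}_x$ with $p\to p_0-$, $q\to p_0+$ along $D$. Its existence in the total-variation norm will follow from (\ref{7}) by precisely the Cauchy argument used in Lemma~\ref{lem1}: for $p_1,p_2\to p_0-$ and $q_1,q_2\to p_0+$ in $D$, $\Var(\mu^{p_1q_1}_x-\mu^{p_2q_2}_x)$ is bounded by the sum of $2\nu_x((\min(p_i),\max(p_i)))^{1/2}$ and $2\nu_x((\min(q_i),\max(q_i)))^{1/2}$, both of which vanish as the corresponding intervals shrink below and above $p_0$. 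I shall then show (i) $\mu^{p_0p_0-+}_x\neq 0$ and (ii) $\supp\mu^{p_0p_0-+}_x\subseteq\supp\mu^{p_0p_0-}_x\cap\supp\mu^{p_0p_0+}_x$, which together yield the desired nonempty intersection.

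For (i), I will compute $\mu^{pq}_x(S)$ via the representation (\ref{repr}) with $\Phi\equiv 1$ and $\psi\equiv 1$, which by Plancherel gives $\mu^{pq}_x(S)=\lim_{m}\lim_{r}\int K_m(x-y)U_r(y,p)U_r(y,q)\,dy$. For $p,q\in D$ with $p\le q$, the identity $\theta(u_r-p)\theta(u_r-q)=\theta(u_r-q)$ together with $\theta(u_r-\cdot)\rightharpoonup u_0(\cdot,\cdot)$ yield the weak-$*$ limit $U_r(\cdot,p)U_r(\cdot,q)\rightharpoonup u_0(\cdot,q)(1-u_0(\cdot,p))$; the Lebesgue-point property at $x$ then gives $\mu^{pq}_x(S)=u_0(x,q)(1-u_0(x,p))$. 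Since $D$ is countable, this identity may be assumed to hold on $\Omega''$ (after removing a null set) for all such pairs simultaneously. Passing to the limit $p\to p_0-$, $q\to p_0+$,
$$
\mu^{p_0p_0-+}_x(S)=u_0(x,p_0+)\bigl(1-u_0(x,p_0-)\bigr)=\nu_x((p_0,\infty))\cdot\nu_x((-\infty,p_0)),
$$
which is strictly positive because $a<p_0<b$ and $a,b\in\supp\nu_x$ force both factors to be positive.

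For (ii), I shall pass the Cauchy--Schwarz inequality (\ref{pd2}) to the limit. Lemma~\ref{lem1} gives $\mu^{pp}_x\to\mu^{p_0p_0-}_x$ and $\mu^{qq}_x\to\mu^{p_0p_0+}_x$ in total variation, and the analogous Cauchy argument produces the mixed limit, so $|\mu^{pq}_x|(A)\to|\mu^{p_0p_0-+}_x|(A)$ and $\mu^{pp}_x(A)\mu^{qq}_x(A)\to\mu^{p_0p_0-}_x(A)\mu^{p_0p_0+}_x(A)$ for every Borel $A\subset S$. In the limit,
$$
|\mu^{p_0p_0-+}_x|(A)\le\bigl(\mu^{p_0p_0-}_x(A)\,\mu^{p_0p_0+}_x(A)\bigr)^{1/2}.
$$
If the two supports were disjoint they would lie in disjoint open sets $V_\pm\subset S$; decomposing any Borel $A$ as $(A\cap V_-)\cup(A\cap V_+)\cup(A\setminus(V_-\cup V_+))$, the right-hand side vanishes on every piece, forcing $\mu^{p_0p_0-+}_x=0$ and contradicting (i). The main technical obstacle is the careful justification of the pointwise formula $\mu^{pq}_x(S)=u_0(x,q)(1-u_0(x,p))$ at the single Lebesgue point $x$ simultaneously for all $p,q\in D$; everything else reduces to Cauchy-limit bookkeeping and the support-separation argument above.
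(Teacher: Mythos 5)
Your proposal is correct and follows essentially the same route as the paper: the contradiction comes from the Cauchy--Schwarz bound (\ref{pd2}) plus disjointness of the supports (forcing the mixed limit to vanish), against the explicit computation $\mu^{pq}_x(S)\to\nu_x((-\infty,p_0))\,\nu_x((p_0,+\infty))>0$ via (\ref{repr}) and the weak-$*$ limit of $U_r(\cdot,p)U_r(\cdot,q)$. The only cosmetic difference is that you first construct the mixed one-sided limit $\mu^{p_0p_0-+}_x$ and then show it is both zero and nonzero, whereas the paper takes the two iterated limits ($\Var\mu^{pq}_x\to 0$ and $\mu^{pq}_x(S)\to$ a positive number) directly without needing the mixed limit to exist a priori; the Lebesgue-point issue you flag is already absorbed into the definition of $\Omega'$.
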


\begin{proof}
First, note that since $x\in\Omega''\subset\Omega'$ is a Lebesgue point of the functions $u_0(\cdot,p)$ for all $p\in D$ while
$D$ is dense, the distribution function $u_0(x,\lambda)=\nu_x((\lambda,+\infty))$ is uniquely defined
by the relation $u_0(x,\lambda)=\sup\limits_{p\in D,p>\lambda} u_0(x,p)$. In particular, the measure $\nu_x$ is well-defined at the point $x$.

The statement that the function
$\lambda\to\xi\cdot\varphi(x,\lambda)$ is constant in a vicinity of $p_0$ for all $\xi\in H_+\cap H_-$, $\xi\not=0$ readily follows from the assertion of Theorem~\ref{th3}. Hence, we only need to show that  $\supp\mu^{p_0p_0+}_x\cap \supp\mu^{p_0p_0-}_x\not=\emptyset$.  We assume to the contrary that $S_+\cap S_-=\emptyset$, where
$S_\pm=\supp\mu^{p_0p_0\pm}_x$. Denote $C_+=S\setminus S_+$, $C_-=S\setminus S_-$, Then $S=C_+\cup C_-$, $\mu_x^{p_0p_0+}(C_+)=\mu_x^{p_0p_0-}(C_-)=0$. Therefore, by relation (\ref{pd2}), for all $p,q\in D$, $p<p_0<q$
\begin{eqnarray*}
\Var\mu^{pq}_x=|\mu^{pq}_x|(S)\le |\mu^{pq}_x|(C_+)+|\mu^{pq}_x|(C_-)\le \\ \left(\mu^{pp}_x(C_+)\mu^{qq}_x(C_+)\right)^{1/2}+\left(\mu^{pp}_x(C_-)\mu^{qq}_x(C_-)\right)^{1/2}\le
\left(\mu^{qq}_x(C_+)\right)^{1/2}+\left(\mu^{pp}_x(C_-)\right)^{1/2},
\end{eqnarray*}
where we use that $\mu^{pp}_x(A)\le \mu^{pp}_x(S)\le 1$ for all $p\in D$ and every Borel set $A\subset S$, see
(\ref{7}). It follows from the obtained estimate and Lemma~\ref{lem1} that
$$
\lim_{p\to p_0-}\lim_{q\to p_0+}\Var\mu^{pq}_x\le \left(\mu^{p_0p_0+}_x(C_+)\right)^{1/2}+\left(\mu^{p_0p_0-}_x(C_-)\right)^{1/2}=0.
$$
Thus,
\begin{equation}\label{van}
\mu^{pq}_x\to 0 \ \mbox{ in } \M(S) \ \mbox{ as } p\to p_0-, q\to p_0+.
\end{equation}
On the other hand, by (\ref{repr})
\begin{eqnarray}\label{37}
\mu^{pq}_x(S)=\lim_{m\to\infty}\lim_{r\to\infty} \int_{\R^n}
F(\Phi_mU_r(\cdot,p))(\xi)\overline{F(\Phi_mU_r(\cdot,q))(\xi)}
d\xi=\nonumber\\
\lim_{m\to\infty}\lim_{r\to\infty} \int_{\R^n}
U_r(y,p)U_r(y,q)K_m(x-y)dy.
\end{eqnarray}
Observe that $U_r(x,\lambda)=\theta(u_r(x)-\lambda)-u_0(x,\lambda)$.
Since $U_r(\cdot,p)\mathop{\rightharpoonup}\limits_{r\to\infty} 0$ for all $p\in D$ and $(\theta(u_r(y)-p)-1)\theta(u_r(y)-q)\equiv 0$, we find

\begin{eqnarray*}
\lim_{r\to\infty} \int_{\R^n}
U_r(y,p)U_r(y,q)K_m(x-y)dy= \\\lim_{r\to\infty} \int_{\R^n}
(U_r(y,p)-1)U_r(y,q)K_m(x-y)dy= \\
\lim_{r\to\infty} \int_{\R^n}(\theta(u_r(y)-p)-1-u_0(y,p))(\theta(u_r(y)-q)-u_0(y,q))K_m(x-y)dy= \\
\lim_{r\to\infty} \int_{\R^n}
[(1-\theta(u_r(y)-p))u_0(y,q)-u_0(y,p)(\theta(u_r(y)-q)-u_0(y,q))]K_m(x-y)dy \\ =\int_{\R^n}(1-u_0(y,p))u_0(y,q)K_m(x-y)dy.
\end{eqnarray*}
In the limit as $m\to\infty$ this yields
\begin{eqnarray*}
\lim_{m\to\infty}\lim_{r\to\infty} \int_{\R^n}
U_r(y,p)U_r(y,q)K_m(x-y)dy= \\ \lim_{m\to\infty}\int_{\R^n}(1-u_0(y,p))u_0(y,q)K_m(x-y)dy=(1-u_0(x,p))u_0(x,q).
\end{eqnarray*}
Here we take into account that $x$ is a Lebesgue point of the functions $u_0(y,p)$, $u_0(y,q)$. By (\ref{van}), (\ref{37}) we find
\begin{eqnarray*}
0=\lim_{p\to p_0-}\lim_{q\to p_0+}\mu^{pq}_x(S)=\\ \lim_{p\to p_0-}\lim_{q\to p_0+}(1-u_0(x,p))u_0(x,q)=
\nu_x((-\infty,p_0))\nu_x((p_0,+\infty))>0,
\end{eqnarray*}
since $a<p_0<b$ and $[a,b]$ is the minimal segment containing $\supp\nu_x$. The obtained contradiction implies that
$S_+\cap S_-\not=\emptyset$ and completes the proof.
\end{proof}

Now we are ready to prove Theorem~\ref{th1}.
\begin{proof}
Let $u_r=u_{k_r}$ be a subsequence of $u_k$ chosen in accordance with Proposition~\ref{pro2}. In particular, this subsequence converges to a measure-valued function $\nu_x\in\MV(\Omega)$. In view of (\ref{pr2}) for a.e. $x\in\Omega$
\begin{equation}\label{mean}
u(x)=\int\lambda d\nu_x(\lambda).
\end{equation}
We define the set of full measure $\Omega''\subset\Omega$ and the minimal segment $[a(x),b(x)]$, containing $\supp\nu_x$, $x\in\Omega''$. In view of (\ref{mean}) $u(x)\in (a(x),b(x))$ whenever $a(x)<b(x)$. By Corollary~\ref{cor2} the function $\xi\cdot\varphi(x,\cdot)$ is constant in a vicinity of $u(x)$ for some vector $\xi\not=0$. But this contradicts to the assumption of Theorem~\ref{th1}. Therefore, $a(x)=b(x)=u(x)$ for a.e. $x\in\Omega$. This means that $\nu_x(\lambda)=\delta(\lambda-u(x))$. By Theorem~\ref{thT} the subsequence $u_r\to u$ as $r\to\infty$ in $L^1_{loc}(\Omega)$. Finally, since the limit function $u(x)$ does not depend of the choice of a subsequence $u_r$, we conclude that the original sequence $u_k\to u$ in $L^1_{loc}(\Omega)$ as $k\to\infty$. The proof is complete.
\end{proof}

\section{Decay property}
This section is devoted to the proof of Theorem~\ref{th2}. Suppose that $u(t,x)$ is a unique e.s. to problem
(\ref{cl}), (\ref{ini}) with the periodic initial data $u_0(x)$. By Remark~\ref{rem1} we can assume that
$u(t,x)\in C([0,+\infty),L^1(\T^n))$ (after possible correction on a set of null measure). We consider the sequence
$u_k(t,x)=u(kt,kx)$, $k\in\N$, consisting of e.s. of (\ref{cl}). As was firstly shown in \cite{ChF}, the decay property (\ref{dec}) is equivalent to the strong convergence $u_r(t,x)\mathop{\to}\limits_{r\to\infty} I=\const$ in $L^1_{loc}(\Pi)$ of a subsequence $u_r=u_{k_r}(t,x)$. As follows from \cite[Lemma~3.2(i)]{PaAIHP},
$u_r\rightharpoonup u^*$, where $u^*=u^*(t)$ is a weak-$*$ limit of the sequence $a_0(k_rt)$, where
$\displaystyle a_0(t)=\int_{\T^n}u(t,x)dx$. Since $u(t,x)$ is an e.s. of (\ref{cl}), this function is constant:
$\displaystyle a_0(t)\equiv I=\int_{\T^n}u_0(x)dx$, in view of (\ref{mass}). Therefore, $u_r\rightharpoonup I$ as $r\to\infty$
(actually, the original sequence $u_k\rightharpoonup I$ as $k\to\infty$).

Let $\mu^{pq}$, $p,q\in E$, be the H-measure corresponding to a subsequence $u_r=u_{k_r}(t,x)$.
Recall that $\mu^{pq}=\mu^{pq}(t,x,\tau,\xi)\in\M_{loc}(\Pi\times S)$, where
$$
S=\{ \ \hat\xi=(\tau,\xi)\in\R\times\R^n \ | \ |\hat\xi|^2=\tau^2+|\xi|^2=1 \ \}
$$
is a unit sphere in the dual space $\R^{n+1}$ (the variable $\tau$ corresponds to the time variable $t$).

By \cite[Theorem~3.1]{PaAIHP} the following localization principle holds $$\supp\mu^{pq}\subset\Pi\times S_0,$$ where
$$
S_0= \{ \ \hat\xi/|\hat\xi| \ | \ \hat\xi=(\tau,\xi)\not=0, \tau\in\R, \ \xi\in L' \ \}.
$$
As was demonstrated in Proposition~\ref{pro3}, $\mu^{pq}=\mu^{pq}_{t,x}dtdx$ for all $p,q\in D$, where
$D\subset E$ is a countable dense subset and measures $\mu^{pq}_{t,x}\in \M(S)$, are defined for all $(t,x)$ belonging to a set of full measure $\Pi'\subset\Pi$. Obviously, the identity
\begin{equation}\label{38}
\langle\mu^{pp},\Phi(t,x,\hat\xi)\rangle =\int_\Pi
\langle\mu^{pp}_{t,x}(\hat\xi),\Phi(t,x,\hat\xi)\rangle dtdx,
\end{equation}
$\Phi(t,x,\hat\xi)\in C_0(\Pi\times S)$, remains valid also for compactly supported Borel functions $\Phi$. Taking $\Phi=\phi(t,x)h(\hat\xi)$, where $\phi(t,x)\in C_0(\Pi)$, $\phi(t,x)\ge 0$ while $h(\hat\xi)$ is an indicator function of the set $S\setminus S_0$, we derive from (\ref{38}) that
$$
\int_\Pi
\mu^{pp}_{t,x}(S\setminus S_0)\phi(t,x)dtdx=0
$$
and since $\mu^{pp}_{t,x}\ge 0$ and $\phi(t,x)\in C_0(\Pi)$ is arbitrary nonnegative function, it follows from this identity that $\mu^{pp}_{t,x}(S\setminus S_0)=0$ for all $p\in D$, $(t,x)\in\Pi'$. By relation (\ref{pd2}) we claim that, more generally,  $|\mu^{pq}_{t,x}|(S\setminus S_0)=0$ for all $p,q\in D$, $(t,x)\in\Pi'$. Finally, in view of Lemma~\ref{lem1}, we find that $|\mu^{pq\pm}_{t,x}|(S\setminus S_0)=0$, that is,
\begin{equation}\label{39}
\supp\mu^{pq\pm}_{t,x}\subset S_0 \ \forall p,q\in\R, (t,x)\in\Pi'.
\end{equation}
Further, $u_r(t,x)$ is a sequence of entropy solutions of (\ref{cl}). Therefore (~see for instance \cite{PaJMS}~) the sequences
$$
\div[\theta(u_r-p)(\hat\varphi(u_r)-\hat\varphi(p))]= ((u_r-p)^+)_t+\div_x[\theta(u_r-p)(\varphi(u_r)-\varphi(p))]
$$
are compact in $H^{-1}_{d,loc}(\Pi)$ for some $d>1$ and all $p\in\R$, where $\hat\varphi(u)=(u,\varphi(u))\in C(\R,\R^{n+1})$, and we use the notation $v^+=\max(v,0)$.

Denote by $\nu_{t,x}\in\MV(\Pi)$ the limit measure valued function for a sequence $u_r$, and by $[a(t,x),b(t,x)]$ the minimal segment containing $\supp\nu_{t,x}$.

Suppose that $(t,x)\in\Pi'$, $a(t,x)<b(t,x)$. Then $I=\int\lambda d\nu_{t,x}(\lambda)\in (a(t,x),b(t,x))$. By Corollary~\ref{cor2} we find that there exists $\hat\xi=(\tau,\xi)\in\supp\mu^{II+}_{t,x}\cap\supp\mu^{II-}_{t,x}$ and $\delta>0$ such that the function
\begin{equation}\label{40}
\lambda\to\hat\xi\cdot\hat\varphi(\lambda)=\tau u+\xi\cdot\varphi(u)=c=\const
\end{equation}
on the interval $V=\{\lambda \ | \ |\lambda-I|<\delta\}$. By (\ref{39}) $\hat\xi\in S_0$, which implies that we can assume that $\xi\in L'$ in (\ref{40}). Evidently, $\xi\not=0$ (otherwise, $\tau u\equiv c$ on $V$ for $\tau\not=0$).
Hence the function $\xi\cdot\varphi(u)=c-\tau u$ is affine, which contradicts (\ref{ND2}). Thus, $a(t,x)=b(t,x)=I$ for a.e. $(t,x)\in\Pi$. We conclude that $\nu_{t,x}(\lambda)=\delta(\lambda-I)$ an by Theorem~\ref{thT} the sequence
$u_r\to I$ as $r\to\infty$ strongly (in $L^1_{loc}(\Pi)$~). As was mentioned above (one can simply repeat the conclusive part of the proof of Theorem~1.1 in \cite{PaAIHP}), this implies (\ref{dec}).

Conversely, if the assumption (\ref{ND2}) fails, we can find $\xi\in L'$, $\xi\not=0$, and constants $a,b\in\R$ such that
$\xi\cdot\varphi(\lambda)\equiv au+b$ on a segment $[I-\delta,I+\delta]$, $\delta>0$. Then, as is easily verified,
the function
$$
u(t,x)=I+\delta\sin(2\pi(\xi\cdot x-at))
$$
is the e.s. of (\ref{cl}), (\ref{ini}) with initial data $u_0(x)=I+\delta\sin(2\pi(\xi\cdot x))$. It is clear that $u_0(x)$ is $L$-periodic and $\int_{\T^n} u_0(x)dx=I$, but the e.s. $u(t,x)$ does not satisfy the decay property.

\begin{example}
Let $n=1$, $\varphi(u)=|u|$.
Let $u=u(t,x)$ be an e.s. of the problem
\begin{equation}\label{41}
u_t+(|u|)_x=0, \quad u(0,x)=u_0(x),
\end{equation}
where $u_0(x)\in L^\infty(\R)$ is a nonconstant periodic function with a period $l$ (for a constant $u_0\equiv c$ the e.s. $u\equiv c$ and the decay property is evident). Notice that no previous results \cite{ChF,Daferm,PaAIHP} can help to answer the question whether the decay property is satisfied. However, as follows from Theorem~\ref{th2}, if $I=\frac{1}{l}\int_0^l u_0(x)dx=0$, then the decay property holds: $\int_0^l|u(t,x)|dx\to 0$ as $t\to\infty$.
Actually, the condition $\int_0^l u_0(x)dx=0$ is also necessary for the decay property (\ref{dec}). Indeed, $u(t,x)=u_0(x\mp t)$ if $\pm u_0(x)\ge 0$ (then $\pm I>0$), and the decay property is evidently violated.
In the remaining case when $u_0$ changes sign we define the functions $u_+(t,x)=v_+(x-t)$, $u_-(t,x)=v_-(x+t)$,
where $v_+(x)=\max(u_0(x),0)\ge 0$, $v_-(x)=\min(u_0(x),0)\le 0$. Note that this functions take zero values on sets of positive measures. By the construction, $v_-(x)\le u_0(x)\le v_+(x)$ and $u_\pm(t,x)$ are e.s. of (\ref{41}) with initial data $v_\pm(x)$. In view of the known property of monotone dependence of e.s. on initial data
$u_-(t,x)\le u(t,x)\le u_+(t,x)$ a.e. on $\Pi$. These inequality can be written in the form
\begin{equation}\label{42}
u(t,x-t)\ge v_-(x), \quad u(t,x+t)\le v_+(x).
\end{equation}
Assuming that $u(t,x)$ satisfies the decay property, we find, with the help of $x$-periodicity of $u(t,\cdot)$, that
$$
\int_0^l |u(t,x\pm t)-I|dx=\int_0^l |u(t,x)-I|dx\to 0 \ \mbox{ as } t\to +\infty,
$$
that is, the functions $u(t,x\pm t)\mathop{\to}\limits_{t\to +\infty} I$ in $L^1([0,l])$.
Passing to the limit as $t\to +\infty$ in (\ref{42}), we find that $v_-(x)\le I\le v_+(x)$ for a.e. $x\in\R$. The latter is possible only if $I=0$. We conclude that the decay property holds only in the case $I=0$.
\end{example}

\begin{remark}
Theorem~\ref{th2} can be extended to more general case of almost periodic initial data (in the Besicovitch sense \cite{Bes}).
Repeating the arguments of \cite{PaAP}, we arrive at the following analogue of Theorem~\ref{th2}.

\begin{theorem}\label{th4}
Let $M_0$ be the additive subgroup of $\R^n$ generated by the spectrum of $u_0$. Assume that
for all $\xi\in M_0$, $\xi\not=0$ the function $\xi\cdot\varphi(\lambda)$ is not affine in any vicinity of
$I=\dashint_{\R^n} u_0(x)$.
Then the e.s. $u(t,x)$ of (\ref{cl}), (\ref{ini}) satisfies the decay property
$$
\lim_{t\to +\infty}\dashint_{\R^n} |u(t,x)-I|dx=0.
$$
Here $\displaystyle\dashint_{\R^n} v(x)dx$ denotes the mean value of an almost periodic function $v(x)$ (see \cite{Bes}~).
\end{theorem}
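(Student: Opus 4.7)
The plan is to follow the scheme of the proof of Theorem~\ref{th2} verbatim, replacing the dual lattice $L'$ by the subgroup $M_0$ generated by the spectrum of $u_0$, and invoking the almost periodic machinery of \cite{PaAP} where the periodic-specific ingredients were used.

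First I would set up the self-similar rescaling $u_k(t,x)=u(kt,kx)$, $k\in\N$. Each $u_k$ is an entropy solution of (\ref{cl}), so the sequences $\div[\theta(u_k-p)(\hat\varphi(u_k)-\hat\varphi(p))]$ are compact in $H^{-1}_{d,loc}(\Pi)$ for some $d>1$, and the framework of Sections~2--3 applies. The argument of \cite{PaAP} (parallel to \cite{ChF} in the periodic case) shows that the decay property $\dashint_{\R^n}|u(t,x)-I|dx\to 0$ is equivalent to strong $L^1_{loc}(\Pi)$-convergence of a subsequence $u_r=u_{k_r}$ to $I$. The weak-$*$ limit must be the constant $I$ because the Besicovitch mean of $u(t,\cdot)$ is conserved and equal to $I$ for all $t\ge 0$ (the almost periodic analogue of (\ref{mass})).

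Second, I would extract an H-measure $\{\mu^{pq}\}_{p,q\in E}$ for the subsequence $u_r$ and prove the appropriate localization of its support. This is the key new ingredient: replicating \cite[Theorem~3.1]{PaAIHP} in the almost periodic setting via the Bochner--Fej\'er approximations employed in \cite{PaAP}, one obtains
$$
\supp\mu^{pq}\subset \Pi\times S_0',\qquad S_0'=\{\hat\xi/|\hat\xi|\ :\ \hat\xi=(\tau,\xi)\neq 0,\ \tau\in\R,\ \xi\in M_0\},
$$
the spatial frequencies being constrained to lie in the subgroup generated by the spectrum of the initial datum. Disintegrating $\mu^{pq}=\mu^{pq}_{t,x}\,dtdx$ and invoking (\ref{pd2}) together with Lemma~\ref{lem1} propagates this support condition to the one-sided limits, so that $\supp\mu^{pq\pm}_{t,x}\subset S_0'$ for all $p,q\in\R$ at a.e.\ $(t,x)\in\Pi$.

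Third, I would finish exactly as in Theorem~\ref{th2}. Let $\nu_{t,x}$ be the Young measure limit of $u_r$ and $[a(t,x),b(t,x)]$ be the minimal segment containing $\supp\nu_{t,x}$. Since $u_r\rightharpoonup I$, one has $I=\int\lambda\,d\nu_{t,x}(\lambda)\in[a(t,x),b(t,x)]$, and if $a(t,x)<b(t,x)$ then $I$ is interior. By Corollary~\ref{cor2} there exists $\hat\xi=(\tau,\xi)\in\supp\mu^{II+}_{t,x}\cap\supp\mu^{II-}_{t,x}\subset S_0'$ and $\delta>0$ with $\tau\lambda+\xi\cdot\varphi(\lambda)=\const$ for $|\lambda-I|<\delta$. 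Necessarily $\xi\neq 0$ (else $\tau\neq 0$ would force $\tau\lambda$ to be constant), so $\xi\in M_0\setminus\{0\}$ and $\xi\cdot\varphi(\lambda)=\const-\tau\lambda$ is affine in a vicinity of $I$, contradicting the hypothesis. Hence $\nu_{t,x}=\delta(\lambda-I)$ a.e., Theorem~\ref{thT} yields $u_r\to I$ in $L^1_{loc}(\Pi)$, and the decay property follows from the equivalence in the first step.

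The main obstacle is the localization $\supp\mu^{pq}\subset\Pi\times S_0'$, since the other two steps are essentially transcriptions of already established arguments. This step requires approximating the rescaled entropy solutions by Bochner--Fej\'er polynomials with frequencies in $M_0$, testing the defining relation (\ref{Hm}) for $\mu^{pq}$ against functions localized in frequency away from $S_0'$, and exploiting that only frequencies in $M_0$ survive in the Besicovitch mean. Once this technical point (handled in \cite{PaAP} for closely related situations) is in place, the remainder is routine.
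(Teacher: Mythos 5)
Your proposal is correct and follows essentially the same route the paper intends: the paper proves Theorem~\ref{th4} only by the remark ``repeating the arguments of \cite{PaAP}'', which amounts exactly to your three steps --- the rescaling and equivalence of decay with strong convergence of $u_r$, the localization $\supp\mu^{pq}\subset\Pi\times S_0'$ with spatial frequencies in $M_0$ obtained via the Bochner--Fej\'er machinery of \cite{PaAP}, and the final contradiction through Corollary~\ref{cor2} as in the proof of Theorem~\ref{th2}. You have also correctly identified the localization of the H-measure support as the only genuinely new technical point relative to the periodic case.
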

\end{remark}

{\bf Acknowledgements.}
The author is supported by Russian Foundation for Basic Research (grant 15-01-07650-a) and the Ministry of Education and Science of Russian Federation within the framework of state task (project no. 1.857.2014/K).

\end{document}